\theoremstyle{plain}
\newtheorem{thm}{Theorem} % 定理等の最初の数字もequationと同じものにする.
\newtheorem{prop}[thm]{Proposition}
\newtheorem{lem}[thm]{Lemma}
\numberwithin{thm}{section} % セクション番号から定理環境の番号付けをする.
\theoremstyle{definition}
\newtheorem{rem}[thm]{Remark}
\newcommand{\MyAlphabet}[1]{\@tfor\Ch@r:=#1\do{
	\expandafter\edef\csname bb\Ch@r\endcsname{\noexpand\mathbb{\Ch@r}}
	\expandafter\edef\csname cal\Ch@r\endcsname{\noexpand\mathcal{\Ch@r}}
	\expandafter\edef\csname frak\Ch@r\endcsname{\noexpand\mathfrak{\Ch@r}}
	\expandafter\edef\csname scr\Ch@r\endcsname{\noexpand\mathscr{\Ch@r}}
}}
\let\epsilon\relax
\DeclareMathOperator{\epsilon}{\varepsilon}
\newcommand{\MyMathOperators}[1]{\@for\op:=#1\do{
	\expandafter\edef\csname\op\endcsname{\noexpand\mathop{\noexpand\mathrm{\op}}\nolimits}
}}
\renewcommand{\bar}[1]{\overline{#1}}
\newcommand{\red}[1]{\textcolor{red}{#1}} % コメント用
\DeclarePairedDelimiter{\skakko}{\lparen}{\rparen} %括弧( )
\DeclarePairedDelimiter{\mkakko}{\lbrace}{\rbrace} %括弧{ }
\DeclarePairedDelimiter{\lkakko}{\lbrack}{\rbrack} %括弧[ ]
\DeclarePairedDelimiter{\gkakko}{\langle}{\rangle} %括弧< >
\DeclarePairedDelimiterX{\set}[2]{\lbrace}{\rbrace}{#1\,\delimsize\vert\,#2} %集合{ }
\DeclareSymbolFont{cyrletters}{OT2}{wncyr}{m}{n}
\DeclareMathSymbol{\Sha}{\mathalpha}{cyrletters}{"58}
\title{The 2-adic valuations of the algebraic central $L$-values for quadratic twists of weight 2 newforms}
\author{Taiga Adachi}
\address{Joint Graduate School of Mathematics for Innovation, Kyushu University, Motooka 744, Nishi-ku Fukuoka 819-0395, Japan}
\email{t.adachi1729@gmail.com}
\author{Keiichiro Nomoto}
\address{Koden Electronics Co., Ltd.}
\email{k-nomoto@koden-electronics.co.jp}
\author{Ryota Shii}
\address{Graduate School for Mathematics, Kyushu University, Motooka 744, Nishi-ku Fukuoka 819-0395, Japan}
\email{shii.ryota@gmail.com}
\date{}
\keywords{$L$-function, 2-adic valuation, Modular symbol, Elliptic Curve}
\subjclass[2020]{Primary 11G40; Secondary 11G05}
\begin{document}

\begin{abstract}
    Let $f$ be a normalized newform of weight 2 on $\Gamma_0(N)$ whose coefficients lie in $\bbQ$ and let $\chi_M$ be a primitive quadratic Dirichlet character with conductor $M$. In this paper, under mild assumptions on $M$, we give a sharp lower bound for the 2-adic valuation of the algebraic part of the $L$-value $L(f, \chi_M, 1)$ and evaluate the 2-adic valuation for infinitely many $M$.
\end{abstract}

\maketitle

\section{Introduction}\label{sec:Intro}

\noindent
The aim of this paper is to give a sharp lower bound for the 2-adic valuations of the algebraic central $L$-values for a certain family of quadratic twists of an elliptic curve defined over $\bbQ$. The 2-adic valuation of the algebraic central $L$-value for an elliptic curve is closely related to the order of the 2-primary part of the Tate--Shafarevich group if the analytic rank of the elliptic curve is zero. Also, according to Goldfeld conjecture, the set of square-free integers $m$ for which the quadratic twist of an elliptic curve by the extension $\bbQ(\sqrt{m})/\bbQ$ has analytic rank $0$ is expected to have density $1/2$. Therefore, we also aim to evaluate the 2-adic valuations for infinitely many quadratic twists.

In \cite{zhao97} and \cite{zhao01}, Zhao gave a lower bound for the $2$-adic valuations of the algebraic parts for a family of CM elliptic curves of the form $y^2=x^3-D^2x$ under some assumptions on $D$. His method using induction on the number of prime divisors of $D$ is highly versatile and currently has been applied to various elliptic curves. For example in \cite{Nomoto:2023}, the second author of this paper calculated the 2-adic valuations for a wider family of elliptic curves than the one discussed by Zhao. Kezuka \cite{Kezuka:2018} applied Zhao's method to a certain family of elliptic curves having complex multiplication by $\bbZ[(-1+\sqrt{-3})/2]$. For elliptic curves having complex multiplication by a ring which is neither $\bbZ[\sqrt{-1}]$ nor $\bbZ[(-1+\sqrt{-3})/2]$, there are Coates--Li's result \cite{CL:2020} and Coates--Kim--Liang--Zhao's result \cite{cltz}.

There are also several works, using Zhao's method, on elliptic curves defined over $\bbQ$ that do not necessarily have complex multiplication. In \cite{Zhai:2016}, Zhai evaluated the 2-adic valuations for a family of quadratic twists $E^{(m)}$ of elliptic curves when $m$ satisfies $m\equiv 1\bmod 4$ and additional assumptions. He gave the lower bound for the 2-adic valuations for a similar family of quadratic twists in \cite{Zhai:2020-2}. Cai--Li--Zhai \cite{CLZ:2020} determined the 2-adic valuations for a certain family of quadratic twists $E^{(m)}$ with $m\equiv 1\bmod 4$. In \cite{Zhai:2020-1}, Zhai also gave a stronger lower bound for the 2-adic valuations for a subfamily of \cite{Zhai:2020-2} and determined the 2-adic valuation in some cases by combining the result of \cite{CLZ:2020}. Moreover, Zhai gave lower bounds for the 2-adic valuations in terms of local Tamagawa factors in \cite{Zhai:2021}. All results mentioned here calculated only for quadratic twists $E^{(m)}$ where $m\equiv 1\bmod 4$. In this paper, we give lower bounds for 2-adic valuations in general setting,  including the case $m\equiv 3\bmod 4$ and most of the cases discussed in \cite{Zhai:2016}, \cite{Zhai:2020-2}, \cite{CLZ:2020} and \cite{Zhai:2020-1}.
\\

Before stating theorems, we introduce several notations. (See \S \ref{sec:Preliminaries} for details.) Let $E$ be an elliptic curve defined over $\bbQ$. By the modularity theorem, the $\bbQ$-isogeny class of $E$ corresponds to a normalized Hecke eigenform $f\in S_2(\Gamma_0(N))^{\mathrm{new}}$, namely $L(E/\bbQ, s)=L(f, s)$. Therefore, in the rest of the paper, we write almost everything in terms of $f$. Let $m\geq 1$ be a square-free odd integer and $\epsilon\in\{\pm 1\}$. Then, the $L$-function of the quadratic twist $E^{(\epsilon \! m)}$ is equal to the twisted $L$-function $L(f, \chi_M, s)$. Here, $\chi_M$ is a primitive quadratic character with conductor $M$, where $M$ is given by
\begin{align}
    M=\begin{cases}
        m & (\epsilon m\equiv 1\bmod 4),\\
        4m & (\epsilon m\equiv 3\bmod 4).
    \end{cases}
\end{align}
Denote the period lattice associated to $f$ by $\calL_f$. Then, there exist periods $\Omega_{f}^{\pm}\in\bbR$ such that
\begin{align}
    \calL_{f}=\Omega_{f}^{+}\bbZ+i\Omega_{f}^{-}\bbZ, \quad \text{or} \quad
    \calL_{f}=\Omega_{f}^{+}\bbZ+\dfrac{\Omega_{f}^{+}+i\Omega_{f}^{-}}{2}\bbZ.
\end{align}
The period lattice $\calL_f$ is called \textit{rectangular} in the former case and \textit{non-rectangular} in the latter case. 
We define the set of prime numbers as
\begin{align}
    \calS_{i}\coloneqq\set*{q: \text{odd prime}}{q\nmid N, v_2(a_{q}-2)=i},
\end{align}
where $a_{q}$ is the $q$-th Fourier coefficient of $f$ and $v_2$ is the 2-adic valuation of $\bar{\bbQ}$ normalized so that $v_2(2)=1$. The main result is the following.

\begin{thm}\label{thm:MainTheorem}
    Assume that the Manin constant for $E$ is equal to 1. (See \S \ref{sec:Preliminaries}.) Write $M=4^nm=4^nq_1\cdots q_r$, where $n\in\{0, 1\}$ and $q_1, \dots, q_r$ are distinct odd primes. We set $\frakv_m\coloneqq\min_{1\leq i\leq r}\mkakko*{v_2(a_{q_i}-2)}$. We put $\frakw_m$ as $0$ if $\frakv_m=0$ and $1$ otherwise. If $q_i\in \calS_{0}\cup\calS_{1}\cup\calS_{2}$ for all $i$, there exists an integer $c$ independent of $r$ such that 
    \begin{align}
        v_2\skakko*{\frac{L(f, \chi_{M}, 1)}{\Omega_{f}^{\sgn(\chi_{M})}}}
        \geq \frakw_m\cdot r+c, \label{eq:MainTheorem}
    \end{align}
   where $\sgn(\chi_M)$ is the sign of $\chi_M(-1)$.
    Moreover, there exist infinitely many $M$ for which the equality in the inequality \eqref{eq:MainTheorem} holds. Hence, the Mordell--Weil group $E^{(\epsilon \! m)}(\bbQ)$ and the Tate--Shafarevich group $\Sha(E^{(\epsilon \! m)}/\bbQ)$ are finite for such $M$.
\end{thm}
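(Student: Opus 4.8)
The plan is to translate $v_2\skakko*{L(f, \chi_M, 1)/\Omega_f^{\sgn(\chi_M)}}$ into a statement about modular symbols and then to run an induction on the number $r$ of odd prime divisors of $M$, in the spirit of Zhao. First I would invoke the formula of Birch and Stevens for twisted central values: writing $[a/M]^{\pm}$ for the $\pm$-component of the modular symbol attached to the cusp $a/M$---rational numbers lying in a fixed fractional $\bbZ_{(2)}$-ideal, since the Manin constant is assumed to be $1$---one has an identity of the form
\[
\tau(\chi_M)\cdot\frac{L(f, \chi_M, 1)}{\Omega_f^{\sgn(\chi_M)}} = \sum_{a \bmod M} \chi_M(a)\,[a/M]^{\sgn(\chi_M)},
\]
where $\tau(\chi_M)$ is the Gauss sum. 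Its 2-adic valuation equals $n\in\{0,1\}$, so it only shifts the valuation by a bounded amount that is absorbed into $c$. Factoring $\chi_M = \chi_{4^n}\prod_{i=1}^{r}\chi_{q_i}$ and resolving the index $a$ through the Chinese Remainder Theorem endows the right-hand side with a product structure over the prime-power divisors of $M$; this is the structural feature that makes the induction on $r$ work.

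The heart of the argument is the inductive step from $M' = 4^n q_1\cdots q_{r-1}$ to $M = M'q_r$. For this I would apply the Hecke relation at $q_r$ (valid because $q_r\nmid N$), which rewrites a partial sum of modular symbols of denominator $q_rM'$ over a residue class as $a_{q_r}$ times modular symbols of denominator $M'$, plus degeneracy terms; combining this with the Manin two- and three-term relations, the contribution of the adjoined prime $q_r$ is forced to factor through $a_{q_r}-2$, where the subtracted $2$ records the two boundary cusps $0$ and $\infty$ surviving after the quadratic twist kills the interior terms. The goal is then to establish a recursion of the shape
\[
v_2\skakko*{\frac{L(f, \chi_M, 1)}{\Omega_f^{\sgn(\chi_M)}}} \geq \frakw_m + v_2\skakko*{\frac{L(f, \chi_{M'}, 1)}{\Omega_f^{\sgn(\chi_{M'})}}},
\]
whose base case $r=0$ (that is, $M=4^n$) supplies the constant $c$; iterating it $r$ times produces the bound \eqref{eq:MainTheorem}. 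I expect the principal obstacle to lie exactly here: one must bound, uniformly in $r$, the 2-adic valuations of all the degeneracy and error terms so that $c$ does not grow with $r$, and it is for this uniform control---ensuring the gain $v_2(a_{q_i}-2)\geq\frakw_m$ dominates---that the hypothesis $q_i\in\calS_0\cup\calS_1\cup\calS_2$, namely $v_2(a_{q_i}-2)\leq 2$, is imposed.

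For the equality in \eqref{eq:MainTheorem}, I would keep track not merely of the valuation but of the leading 2-adic coefficient throughout the recursion, and show that the main term equals $2^{\frakw_m\cdot r + c}$ times a 2-adic unit whenever the residues of $q_1, \dots, q_r$ modulo a fixed power of $2$ are suitably prescribed, so that no error term interferes with the exact valuation. Exhibiting infinitely many such $M$ then reduces to a density statement: by Dirichlet's theorem on primes in arithmetic progressions---or a Chebotarev argument handling the joint conditions $q_i\in\calS_j$ together with the prescribed $2$-power residues---there are infinitely many admissible tuples $(q_1, \dots, q_r)$, hence infinitely many moduli $M$ realizing the equality. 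Finally, the finiteness assertion is a formal consequence: for these $M$ the valuation is finite, so $L(E^{(\epsilon \! m)}/\bbQ, 1) = L(f, \chi_M, 1)\neq 0$, whence $E^{(\epsilon \! m)}$ has analytic rank $0$ and the theorem of Gross--Zagier and Kolyvagin gives the finiteness of both $E^{(\epsilon \! m)}(\bbQ)$ and $\Sha(E^{(\epsilon \! m)}/\bbQ)$.
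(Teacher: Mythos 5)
Your global frame---the Birch--Stevens/Shimura--Manin formula (Theorem \ref{thm:ShimuraManin}), induction on $r$, Hecke relations producing factors $a_q-2$, Chebotarev for the infinitude, and Kolyvagin--Gross--Zagier for the finiteness---matches the paper, but the step you place at the heart of the induction is not available and is in fact false. You posit a recursion
$v_2\bigl(L(f,\chi_M,1)/\Omega_f^{\sgn(\chi_M)}\bigr)\geq \frakw_m+v_2\bigl(L(f,\chi_{M'},1)/\Omega_f^{\sgn(\chi_{M'})}\bigr)$
between consecutive \emph{primitive} twists $M=M'q_r$. No Hecke identity relates these two quantities: the Hecke operator at $q$ relates the \emph{imprimitive} sums, namely $\calT_{D,M}=(a_q-2\chi_D(q))\,\calT_{D,M/q}$ for $D\mid M/q$ (Lemma \ref{lem:TDM}, Proposition \ref{prop:TDD}), where the character stays at conductor $D$ while only the modulus of the symbols grows; it never converts $\chi_{M'}$ into $\chi_M$. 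Worse, your inequality cannot hold in general: intermediate twisted $L$-values vanish (valuation $+\infty$; see the many $+\infty$ entries in \textsc{Tables} \ref{tab:table1} and \ref{tab:table2}), and your recursion would propagate such vanishing to every multiple of $M'$, forcing $L(f,\chi_M,1)=0$ for all $M$ above a single vanishing level---absurd, and incompatible with the theorem's own equality cases. Relatedly, your base case $r=0$ would make $c$ essentially $v_2(\calT_{4^n})$, whereas the true constant is a minimum involving both $v_2(L(f,1)/\Omega_f^+)$ and $v_2(L(f,\chi_4,1)/\Omega_f^-)$ with $\delta_{\frakv_m,0}$ and $\delta_{\frakv_m,2}$ corrections (Theorems \ref{thm:MainThm1}--\ref{thm:MainThm4}).

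The paper's actual engine is Zhao's divisor-sum decomposition, in which the per-prime gain comes from a \emph{character} identity, not from a chain of $L$-values: one bounds the full sum $\sum_{D}'\calT_{D,M}$ using $\sum_{d\mid m}\chi_d(k)=\prod_{i=1}^{r}(1+\chi_{q_i}(k))$, which is divisible by $2^r$ (Lemma \ref{lem:CharacterSum}), combined with the integrality statement $v_2([k/M]^{+}+[k/M]^{-})\geq\min\{0,v_2([0]^{+})\}$ (Lemma \ref{lem:Integrality}); one then isolates the primitive term $\calT_{M,M}$ ultrametrically, evaluating $\calT_{1,m}$ resp.\ $\calT_{4,4m}$ explicitly by Proposition \ref{prop:TDD} and bounding each interior divisor term by the induction hypothesis \emph{applied to $\calT_{4^nd}$} together with the factor $\prod_{q\mid m/d}(a_q-2\chi_d(q))$---vanishing of intermediate twists only raises those valuations and so never hurts. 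Three further mismatches are worth noting. First, your appeal to CRT glosses over the algebraicity/parity issue that is the paper's stated novelty: divisors $D$ of both signs occur, and one must separate $[\cdot]^{+}$ from $[\cdot]^{-}$ (using the $k\mapsto-k$ symmetry in Proposition \ref{prop:SumBound1}) so that every summand is rational with respect to the correct period. Second, the hypothesis $q_i\in\calS_0\cup\calS_1\cup\calS_2$ is not needed to control ``degeneracy or error terms'' uniformly---there are none---but to make the interior-term bounds compatible with a constant $c$ independent of $r$ (this is where $\frakv_m\leq 2$ and the $\delta_{\frakv_m,2}$ terms enter, e.g.\ in \eqref{eq:MainThm3-8}--\eqref{eq:MainThm3-9}). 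Third, equality is not obtained by tracking leading $2$-adic coefficients: under explicit conditions (e.g.\ $v_2(\calT_4)=1$ and $q_i\in\calS_0^{+}$) the single term $\calT_{4,4m}$ has strictly smaller valuation than both the total and the interior sums, forcing equality in the ultrametric inequality; note also that Proposition \ref{prop:BeforeTwist} rules out some formally available equality conditions (Remark \ref{rem:Assumption}), a nonemptiness issue your residue-prescription plan does not confront. Your Chebotarev and Kolyvagin endgame is correct as stated.
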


\begin{rem}
    In Theorem \ref{thm:MainTheorem}, the constant $c$ is later given explicitly in terms of $v_2(L(f, 1)/\Omega_f^+)$ and $v_2(L(f, \chi_4, 1)/\Omega_f^-)$.
    Moreover, some of the conditions that the equality holds can be written explicitly in terms of the 2-adic valuations and $q_i \bmod 4$. See \S \ref{sec:Preliminaries} for the definition of $\Omega_f^{\pm}$ and Theorem \ref{thm:MainThm1}--\ref{thm:MainThm4} for the precise statements.
\end{rem}

% \begin{rem}
%     In Theorem \ref{thm:MainTheorem}, the integer $c$ is given explicitly. In addition, some of the conditions that the equality holds can be written explicitly in terms of $v_2(L(f, 1)/\Omega_f^+)$, $v_2(L(f, \chi_4, 1)/\Omega_f^-)$ and $q_i \bmod 4$. More precisely, see from Theorem \ref{thm:MainThm1} to Theorem \ref{thm:MainThm4}.
% \end{rem}

%\begin{rem}
%    In Theorem \ref{thm:MainTheorem}, we can obtain a similar lower bound even if we remove the condition that $q_i\in \calS_0\cup\calS_1\cup\calS_2$ for all $i$. However, in such cases, it is not guaranteed to be a sharp lower bound. See \S \ref{sec:TwoAdicVal} for more details.
%\end{rem}

\medskip
\noindent
\textbf{Relation between Theorem \ref{thm:MainTheorem} and some results.}
Let $\Omega_E^+$ be the period associated to $E$ defined in \S \ref{sec:Preliminaries}.
For a prime number $q$ with $(q, N)=1$, $N_{q}=q+1-a_q$ denotes the number of $\bbF_q$-rational points of reduction modulo $q$. We write $F$ for the field obtained by adjoining to $\bbQ$ one fixed root of the 2-division polynomial of $E$. For distinct odd primes $q_1, \dots, q_r$ and $M=m=q_1\cdots q_r$, we state relations between Theorem \ref{thm:MainTheorem} and known results. 

\begin{enumerate}
    \item In \cite{Zhai:2016}, Zhai obtained lower bounds and some equality conditions for various cases. For example in \cite[Theorem 1.1]{Zhai:2016}, he gave an equality condition when $E$ has negative discriminant, $E[2](\bbQ)=\{O\}, v_2(L(E/\bbQ, 1)/\Omega_E^+)=0$, $q_i$ is inert in $F$ for all $i$ and $M\equiv 1\bmod 4$. Note that if $E[2](\bbQ)=\{O\}$ and $q$ is inert in $F$ for a prime number $q$ with $(q, N)=1$, then $a_q$ is odd integer. This is a special case of Theorem \ref{thm:MainThm2}-(i).
    \item In \cite{Zhai:2020-2}, Zhai calculated lower bounds when $q_i\equiv 1\bmod 4$ for all $i$ and $v_2(N_{q_i})\geq 1$. Theorem \ref{thm:MainThm1}-(ii) and Theorem \ref{thm:MainThm2}-(ii) neither contain this result nor are contained in this one.
    \item In \cite{CLZ:2020}, Cai--Li--Zhai showed that the equality holds when $E[2](\bbQ)\simeq \bbZ/2\bbZ$, $v_2(L(E/\bbQ, 1)/\Omega_E^+)=-1$, $q_i\equiv 1\bmod 4$ for all $i$ and $v_2(N_{q_i})=1$. Note that if $E[2](\bbQ)\neq \{O\}$, then $a_q$ is even integer for a prime number $q$ with $(q, N)=1$. This result is a special case of Theorem \ref{thm:MainThm1}-(ii) and Theorem \ref{thm:MainThm2}-(ii).
    \item In \cite{Zhai:2020-1}, Zhai calculated lower bounds when $L(E/\bbQ, 1)\neq 0$, $E[2](\bbQ)\neq \{O\}$, $r\geq 2$, $q_i\equiv 1\bmod 4$ for all $i$ and there exists $k \ (1\leq k\leq r)$ such that $q_k\in \cup_{j\geq 2}\calS_{j}$. It neither includes Theorem  \ref{thm:MainThm1}-(ii) and Theorem \ref{thm:MainThm2}-(ii) nor is included in them.
\end{enumerate}

\medskip
\noindent
\textbf{Example of Theorem \ref{thm:MainTheorem}.} We give an example of Theorem \ref{thm:MainTheorem} in the case $M=4m$. (See \S \ref{sec:Example} for more details.) Let $f\in S_2(\Gamma_0(37))^{\mathrm{new}}$ be the eigenform
\begin{align}
    f=q-2q^2-3q^3+2q^4-2q^5+6q^6-q^7+O(q^8).
\end{align}
We see that $f$ corresponds to an isogeny class of the elliptic curve given by the equation $y^2+y=x^3-x$ with Cremona label \texttt{37a1}. In this case, the integer $c$ appearing in Theorem \ref{thm:MainTheorem} is zero. Therefore, if $q_i\in \calS_{0}\cup\calS_{1}\cup\calS_{2}$ for all $i$, then we have
\begin{align}
    v_2\skakko*{\frac{L(f, \chi_M, 1)}{\Omega_{f}^{\sgn(\chi_M)}}}\geq \frakw_m\cdot r.\label{eq:IntroExample}
\end{align}
The condition for the equality in the inequality \eqref{eq:IntroExample} holds if all primes $q_i$ dividing $m$ satisfy, $q_i\neq 37$, $q_i\equiv 1\bmod 4$ and $a_{q_i}$ is odd. For example, the following prime numbers satisfy these conditions, and in fact, there are infinitely many such prime numbers.
\begin{align}
    &41, 53, 73, 101, 149, 157, 173, 181, 197, 229, 337, 373, 397, 433, 509, 521, 593,\\
    &613, 617, 641, 673, 677, 733, 761, 773, 821, 937, 953, 1009, 1033, 1109, 1117,\\
    &1181, 1193, 1217, 1249, 1321, 1381, 1409, 1453, 1481, 1621, 1637, 1709, 1801,\\
    &1861, 1913, 1933, 1949, 1997, 2069, 2081, 2113, 2137, 2153, 2221, 2269, 2273,\\
    &2293, 2297, 2341, 2357, 2377, 2417, 2441, 2557, 2617, 2689, 2729, 2749, \cdots
\end{align}

\noindent
\textbf{Outline of the proof.} We state an outline of the proof of Theorem \ref{thm:MainTheorem} in the case $M=m$. The new approach for a calculation of 2-adic valuations is to use the modular symbol
\begin{align}
    \gkakko*{r}_{f}^{\pm}\coloneqq\pi i\int_{i\infty}^{r}f(z)dz\pm\pi i\int_{i\infty}^{-r}f(z)dz \quad  (r\in\bbP^1(\bbQ)),
\end{align}
and the quantity
\begin{align}
    \calT_{d, m}=\sum_{k\in(\bbZ/m\bbZ)^\times}\chi_{d}(k)\dfrac{1}{\Omega_{f}^{\sgn(\chi_d)}}\gkakko*{\frac{k}{d}}_{f}^{\sgn(\chi_d)},
\end{align}
where $d$ is a positive divisor of $m$. As we will discuss in the last of this introduction, for example, in \cite{Zhai:2016}, \cite{Zhai:2020-2}, \cite{CLZ:2020}, \cite{Zhai:2020-1} and \cite{Zhai:2021}, 2-adic valuations are calculated without treating $\gkakko*{r}_{f}^{+}$ and $\gkakko*{r}_{f}^{-}$ separately. Separating $\gkakko*{r}_{f}^{+}$ and $\gkakko*{r}_{f}^{-}$ allows us to deal with the case where $m\equiv 3\bmod 4$.

As in previous works, we prove the main theorem based on Zhao's method (cf. \cite{zhao97}, \cite{zhao01}). First, for a divisor $d$ of $m$, we show $\calT_{d, m}$ is equal to $L(f, \chi_d, 1)/\Omega_f^{\sgn(\chi_d)}$ up to $\bar{\bbQ}^\times$. (See Theorem \ref{thm:ShimuraManin} and Proposition \ref{prop:TDD}.) Next, we consider the decomposition
\begin{align}
    \sum_{d\mid m}\calT_{d, m}=\calT_{1, m}+\sum_{\substack{d\mid m \\ d\neq 1, m}}\calT_{d, m}+\calT_{m, m}.\label{eq:Intro-1}
\end{align}
We calculate the 2-adic valuation of the left-hand side in Proposition \ref{prop:SumBound1} and Proposition \ref{prop:SumBound2}. (In this calculation, differences between the rectangular case and the non-rectangular case appear.)
The first term on the right-hand side is equal to the algebraic part $L(f, 1)/\Omega_f^+$ up to $\bar{\bbQ}^\times$. 
(See Proposition \ref{prop:TDD}.) Since the second term on the right-hand side can be calculated from the induction hypothesis, we can consequently calculate the 2-adic valuation of the third term that is essentially equal to the algebraic part $L(f, \chi_m, 1)/\Omega_f^{\sgn(\chi_m)}$. The proof for the case $M=4m$ can be done in almost the same way.

As mentioned before, other works essentially used another modular symbol
\begin{align}
    \gkakko*{r}_{f} \coloneqq \gkakko*{r}_{f}^{+}+\gkakko*{r}_{f}^{-} \quad (r\in\bbP^1(\bbQ))
\end{align}
and the quantity
\begin{align}
    T_{d, m}\coloneqq \sum_{k\in(\bbZ/m\bbZ)^\times}\chi_d(k)\frac{1}{\Omega_f^{\sgn (\chi_{d})}}\gkakko*{\frac{k}{m}}_{f}.
\end{align}
By considering a transformation $k\mapsto -k$, we see that $\calT_{m, m}=T_{m, m}$. However, whereas each term in $\calT_{d, m}$ is algebraic, each term in $T_{d, m}$ is not necessarily algebraic. More precisely, since the periods $\Omega_f^{+}$ and $\Omega_f^{-}$ are algebraically independent for newforms corresponding to non-CM elliptic curves (cf. \cite{Schneider:1957}, \cite{Waldschmidt:2008}), the value $\gkakko*{k/m}_f/\Omega_f^{\sgn(\chi_d)}$ is not necessarily algebraic. Therefore, unlike equation \eqref{eq:Intro-1}, it is necessary to decompose $\sum_{d\mid m}T_{d, m}$ so that each term is algebraic by grouping $\{T_{d, m}\}_{d\mid m}$ based on whether the number of divisors of $d$ is even or to impose additional conditions that the sign of Dirichlet character $\chi_{q_i}$ is $+1$ for any $q_i$. On the other hand, since we work with $\gkakko*{r}_{f}^{+}$ and $\gkakko*{r}_{f}^{-}$ separately, we can calculate each term $\calT_{d, m}$ without grouping them and imposing  the  conditions on the signs of Dirichlet characters.
\\

\noindent
\textbf{Organization of the paper.}
In \S \ref{sec:Preliminaries}, we give definitions of periods and modular symbols.
In \S \ref{sec:TwistLModSymb}, we discuss various properties of the quantity $\calT_{D, M}$.
In \S \ref{sec:TwoAdicVal}, we state the exact statement of Theorem \ref{thm:MainTheorem} in four theorems and prove them.
In \S \ref{sec:Example}, we give numerical examples of Theorem \ref{thm:MainTheorem}.
\\

\noindent
\textbf{Acknowledgements.}
The authors would like to express their sincere gratitude to Shinichi Kobayashi for introducing them to this topic and encouraging them. They are very grateful to Shuai Zhai, Takumi Yoshida and the anonymous referee for their valuable comments on an earlier version. The authors would also like to thank Yu Katagiri for helpful discussions. The first author was supported by WISE program (MEXT) at Kyushu University. The second and third authors were supported by JST SPRING, Grant Number JPMJSP2136.

\section{Preliminaries}\label{sec:Preliminaries}

\noindent
Let $f \in S_{2}(\Gamma_{0}(N))^{\mathrm{new}}$ be a normalized Hecke eigenform.
Assume that $f$ is defined over $\bbQ$, that is, all coefficients of the $q$-expansion of $f$ lie in $\bbQ$. Then, $f$ corresponds to a $\bbQ$-isogeny class of elliptic curves defined over $\bbQ$. We write the optimal elliptic curve in the isogenous class as $E$. There exists a non-constant rational map $\Phi:X_{0}(N) \to E$ defined over $\bbQ$. Fix a global minimal Weierstrass model of $E$ and let $\omega$ be the N\'eron differential form on $E$. Since $f(q)dq/q$ is a basis for the $\bbQ$-vector space $S_{2}(\Gamma_{0}(N))$, there exists $\nu_{E} \in \bbQ^{\times}$ such that 
\begin{align}
    \nu_{E}f(q)\frac{dq}{q} = \Phi^{*}\omega_{E}. \label{eq:manin}
\end{align}
The number $\nu_{E}$ is called the \textit{Manin constant}. It is well-known that $\nu_{E} \in \bbZ$ and conjectured that $\nu_{E} = 1$. In the rest of the paper, \textit{we assume that $\nu_{E}=1$}.  We define the period lattice $\calL_{f}$ for $f$ as
\begin{align}
    \calL_{f}:=\left\{ \int_{\gamma} f(q)\frac{dq}{q} ~\vline~ \gamma \in H_{1}(X_{0}(N)(\bbC), \bbZ) \right\} \subset \bbC.
\end{align}
Since there exists a surjective map $\Gamma_{0}(N) \longrightarrow H_{1}(X_{0}(N), \bbZ)$ (for example, see \cite[Theorem 12.1]{manin1}), 
we can take some $r \in \bbP^{1}(\bbQ)$ and have
\begin{align}
    \calL_{f}=\left\{ 2\pi i \int_{r}^{g \cdot r} f(z)dz ~\vline~ g \in \Gamma_{0}(N) \right\}.
\end{align}
Similarly, one can define the period lattice $\calL_{E}$ of $\omega_{E}$.
It is known that $\calL_{E}$ is of the form
\begin{align}
    \calL_{E} = \begin{cases}
        \Omega_{E}^{+}\bbZ+i\Omega_{E}^{-}\bbZ & (\Delta(E)<0), \vspace{5pt} \\
        \Omega_{E}^{+}\bbZ+\dfrac{\Omega_{E}^{+}+i\Omega_{E}^{-}}{2} \bbZ & (\Delta(E)>0),
    \end{cases}\label{eq:LatticeE}
\end{align}
where $\Delta(E)$ is the discriminant of the fixed model of $E$. By the equation \eqref{eq:manin} and the assumption $\nu_{E}=1$, we see that $\calL_{E}=\calL_{f}$. Therefore we have
\begin{align}
    \calL_{f} = \begin{cases}
        \Omega_{f}^{+}\bbZ+i\Omega_{f}^{-}\bbZ & (\Delta(E)<0), \vspace{5pt} \\
        \Omega_{f}^{+}\bbZ+\dfrac{\Omega_{f}^{+}+i\Omega_{f}^{-}}{2} \bbZ & (\Delta(E)>0),
    \end{cases}\label{eq:Latticef}
\end{align}
where $\Omega_{f}^{\pm}=\Omega_{E}^{\pm}$. For $r \in \bbP^{1}(\bbQ)$, set
\begin{align}
    \gkakko*{r}^{\pm}
    =\gkakko*{r}_{f}^{\pm}
    \coloneqq \pi i \int_{i\infty}^{r}f(z)dz\pm\pi i\int_{i\infty}^{-r}f(z)dz.
\end{align}
It is straightforward to check that $\gkakko*{r}^{+}\in \bbR$ and $\gkakko*{r}^{-}\in i\bbR$. We also set
\begin{align}
    [r]^{+}\coloneqq \frac{1}{\Omega_{f}^{+}}\gkakko*{r}^{+},\quad
    [r]^{-}\coloneqq \frac{1}{i\Omega_{f}^{-}}\gkakko*{r}^{-}.
\end{align}
Then, for any $r \in \bbP^{1}(\bbQ)$, we have $[r]^{\pm} \in \bbQ$ (\textit{cf}. \cite[Chapter I\hspace{-1.2pt}V, \S 2]{Lang:1995}) and $\lkakko*{-r}^{\pm}=\pm\lkakko*{r}^{\pm}$. Note that $\lkakko*{0}^{+}$ is equal to the algebraic part $L(f, 1)/\Omega_{f}^{+}$.

Let $m=q_1\dots q_r$ be the product of distinct odd primes and $M=4^nm$ with $n\in\{0, 1\}$. We write the primitive quadratic Dirichlet character with conductor $M$ as $\chi_M$. Note that for a odd prime number $q$, the sign $\sgn(\chi_q)\coloneqq \chi_q(-1)$ is $+1$ if and only if $q\equiv 1\bmod 4$.

\begin{thm}[cf. {\cite[Theorem 9.9]{manin1}}]\label{thm:ShimuraManin}
    Let $\chi_{M}$ be a quadratic character with conductor $M$ and $\tau(\chi_M)=\sum_{a \, \textrm{mod}\, M}\chi_M(a)e^{2\pi ia/M}$. Then, we have
    \begin{align}
        \tau(\chi_M)\frac{L(f, \chi_{M}, 1)}{\Omega_{f}^{\sgn(\chi_{M})}} =\sum_{k\in (\bbZ\slash M\bbZ)^{\times}} \chi_{M}(k)\left[ \frac{k}{M} \right]^{\sgn(\chi_{M})}.
    \end{align}
    In particular, $L(f, \chi_M, 1)/\Omega_f^{\sgn(\chi_M)}$ is algebraic.
\end{thm}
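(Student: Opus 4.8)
The plan is to derive the formula from the classical integral representation of the twisted central $L$-value together with the Gauss-sum expansion of the twist, and then to project onto the correct sign-eigenspace by means of the substitution $k\mapsto -k$.

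First I would record the integral representation. Since $f$ is a cusp form, $f(iy)=O(e^{-2\pi y})$ as $y\to\infty$, and the twisted form $f_{\chi_M}(z)\coloneqq\sum_{n\geq 1}\chi_M(n)a_n e^{2\pi i n z}$ is again a cusp form (of level dividing $NM^2$), so it decays at every cusp and the relevant path integrals from $i\infty$ converge absolutely at both $0$ and $i\infty$. The Mellin transform at $s=1$ then gives $L(f,\chi_M,1)=2\pi i\int_{i\infty}^{0}f_{\chi_M}(z)\,dz$, exactly as $\gkakko*{0}^{+}=L(f,1)$ reflects the untwisted case.

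Next comes the key input, the Gauss-sum identity. Because $\chi_M$ is primitive and real, so that $\bar\chi_M=\chi_M$, one has $\sum_{a\bmod M}\chi_M(a)e^{2\pi i a n/M}=\chi_M(n)\tau(\chi_M)$ for every $n$, whence $f_{\chi_M}(z)=\tau(\chi_M)^{-1}\sum_{a\bmod M}\chi_M(a)f(z+a/M)$. Substituting this and changing variables $w=z+a/M$ in each summand turns $\tau(\chi_M)L(f,\chi_M,1)$ into $2\pi i\sum_{a}\chi_M(a)\int_{i\infty}^{a/M}f(w)\,dw$. Rewriting each integral through the defining relation $2\pi i\int_{i\infty}^{r}f=\gkakko*{r}^{+}+\gkakko*{r}^{-}$ yields $\tau(\chi_M)L(f,\chi_M,1)=\sum_{k\in(\bbZ/M\bbZ)^{\times}}\chi_M(k)\bigl(\gkakko*{k/M}^{+}+\gkakko*{k/M}^{-}\bigr)$. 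The heart of the argument is then the parity projection: applying the bijection $k\mapsto -k$ on $(\bbZ/M\bbZ)^{\times}$ and using $\chi_M(-k)=\sgn(\chi_M)\chi_M(k)$ together with $\gkakko*{-r}^{\pm}=\pm\gkakko*{r}^{\pm}$ produces a second expression for the same sum, and averaging the two makes the opposite-sign modular symbols cancel, leaving precisely $\tau(\chi_M)L(f,\chi_M,1)=\sum_{k}\chi_M(k)\gkakko*{k/M}^{\sgn(\chi_M)}$. Dividing by $\Omega_f^{\sgn(\chi_M)}$ and unwinding the definition of $\lkakko*{\,\cdot\,}^{\sgn(\chi_M)}$ gives the stated identity; algebraicity is then immediate, since each $\lkakko*{k/M}^{\pm}\in\bbQ$ and $\chi_M(k)\in\{0,\pm1\}$ force the right-hand side to be rational, while $\tau(\chi_M)$ is algebraic, so $L(f,\chi_M,1)/\Omega_f^{\sgn(\chi_M)}$ is algebraic.

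The main obstacle I anticipate lies in the analytic justification underlying the first two steps, namely that the term-by-term integration of $\sum_{a}\chi_M(a)f(\cdot+a/M)$ is legitimate and that $\int_{i\infty}^{0}f_{\chi_M}$ genuinely computes $L(f,\chi_M,1)$, both of which rest on the decay of the (twisted) cusp form at $0$ and at $i\infty$. By contrast, the remaining factor-of-$\pi i$ and real/imaginary bookkeeping in the final normalization is routine once the $k\mapsto -k$ projection has isolated the $\sgn(\chi_M)$-component.
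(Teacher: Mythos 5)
Your proof is correct and follows essentially the same route as the source the paper cites for this statement (the paper gives no proof of its own, referring to Manin's Theorem 9.9, whose argument is exactly this classical Birch--Manin one: unfold the twist via the Gauss-sum identity for the primitive character, integrate term by term using cuspidal decay, then isolate the $\sgn(\chi_M)$-eigencomponent by the $k\mapsto -k$ substitution with $\gkakko*{-r}^{\pm}=\pm\gkakko*{r}^{\pm}$). One bookkeeping point you wave at deserves a word: with the paper's normalization $\lkakko*{r}^{-}=\gkakko*{r}^{-}/(i\Omega_f^{-})$, your computation gives $\tau(\chi_M)L(f,\chi_M,1)=\sum_k\chi_M(k)\gkakko*{k/M}^{\sgn(\chi_M)}$, so in the odd case the displayed identity holds only up to the unit $i$ (coming from $\tau(\chi_M)=i\sqrt{M}$), a discrepancy already implicit in the paper's statement and harmless for the 2-adic valuations it is used for.
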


\section{Twisted \texorpdfstring{$L$}{L}-function and Modular Symbols}\label{sec:TwistLModSymb}

\noindent
In the rest of the paper, we fix a normalized Hecke eigenform $f\in S_{2}(\Gamma_{0}(N))^{\mathrm{new}}$ defined over $\bbQ$. Let $M=4^nm=4^nq_1\cdots q_r$ be an integer with $(M, N)=1$,
where $n\in \{0, 1\}$ and $q_1, \dots, q_r$ are distinct odd primes.
\\

The following lemma is crucial in Proposotion \ref{prop:SumBound1} and Proposition \ref{prop:SumBound2}. These propositions enable us to calculate the left-hand side in equation \eqref{eq:Intro-1}. 

\begin{lem}\label{lem:Integrality}
    For $k\in \bbZ$ with $(k, M)=1$, the following holds.
    \begin{align}
        \nu\skakko*{\lkakko*{\frac{k}{M}}^{+}-\lkakko*{0}^{+}}\in\bbZ, \quad
        \nu\lkakko*{\frac{k}{M}}^{-}\in\bbZ,
    \end{align}
    where $\nu=1$ if $\calL_f$ is rectangular and $\nu=2$ otherwise. 
    In particular, we have
    \begin{align}
        v_2\skakko*{\lkakko*{\frac{k}{M}}^{+}+\lkakko*{\frac{k}{M}}^{-}}
        \geq \min\mkakko*{0, v_2\skakko*{\lkakko*{0}^{+}}}
    \end{align}
\end{lem}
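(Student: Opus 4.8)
\noindent
The plan is to read both displayed quantities off the modular parametrization and to reduce the integrality statement to an equivalence of cusps under $\Gamma_0(N)$. Set $\Lambda(r)\coloneqq\gkakko*{r}^{+}+\gkakko*{r}^{-}=2\pi i\int_{i\infty}^{r}f(z)\,dz$. Since $f$ has rational Fourier coefficients, $\overline{f(z)}=f(-\bar z)$, and integrating along the vertical path gives $\overline{\Lambda(r)}=\Lambda(-r)$; hence $\gkakko*{r}^{+}=\Re\Lambda(r)$ and $\gkakko*{r}^{-}=i\Im\Lambda(r)$ are exactly the real and imaginary parts of $\Lambda(r)$. Under the normalization $\nu_{E}=1$ we have $\Phi^{*}\omega_{E}=2\pi i f(z)\,dz$, so with $\Phi(i\infty)=O$ the Abel--Jacobi description of $\Phi\colon X_{0}(N)\to E=\bbC/\calL_{f}$ reads $\Phi(r)\equiv\Lambda(r)\pmod{\calL_{f}}$ for every cusp $r$.

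\noindent
The first step is to show that the cusp $k/M$ is $\Gamma_0(N)$-equivalent to $0$. Because $\gcd(k,M)=1$ and $\gcd(M,N)=1$, the matrix $\left(\begin{smallmatrix} M & -k \\ Nr & s\end{smallmatrix}\right)$, where $r,s$ solve $Ms+kNr=1$ (possible since $\gcd(M,kN)=1$), lies in $\Gamma_0(N)$ and carries $k/M$ to $0$. As $\Phi$ factors through $X_0(N)$, this yields $\Phi(k/M)=\Phi(0)$, i.e. the \emph{honest} lattice membership $\Lambda(k/M)-\Lambda(0)\in\calL_{f}$.

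\noindent
Next I would extract real and imaginary parts. Using $\gkakko*{0}^{-}=0$ (indeed $\Lambda(0)=L(f,1)\in\bbR$) and expanding $\Lambda(k/M)-\Lambda(0)$ in the basis of $\calL_{f}$ from \eqref{eq:Latticef}, the two cases give, for suitable $a,b\in\bbZ$,
\begin{align}
    \text{(rectangular)}\quad & \lkakko*{\tfrac{k}{M}}^{+}-\lkakko*{0}^{+}=a, \quad \lkakko*{\tfrac{k}{M}}^{-}=b, \\
    \text{(non-rectangular)}\quad & \lkakko*{\tfrac{k}{M}}^{+}-\lkakko*{0}^{+}=a+\tfrac{b}{2}, \quad \lkakko*{\tfrac{k}{M}}^{-}=\tfrac{b}{2},
\end{align}
which is precisely the asserted integrality with $\nu=1$ and $\nu=2$. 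The decisive feature is that the \emph{same} integer $b$ governs both half-integer contributions in the non-rectangular case, so they cancel in the sum; thus in both cases $\bigl(\lkakko*{k/M}^{+}+\lkakko*{k/M}^{-}\bigr)-\lkakko*{0}^{+}\in\bbZ$.

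\noindent
The ``in particular'' statement then follows formally: writing $n\coloneqq\skakko*{\lkakko*{\tfrac{k}{M}}^{+}+\lkakko*{\tfrac{k}{M}}^{-}}-\lkakko*{0}^{+}\in\bbZ$, so that $v_2(n)\geq 0$, one has
\begin{align}
    v_2\skakko*{\lkakko*{\tfrac{k}{M}}^{+}+\lkakko*{\tfrac{k}{M}}^{-}}
    = v_2\skakko*{\lkakko*{0}^{+}+n}
    \geq \min\mkakko*{v_2\skakko*{\lkakko*{0}^{+}},\, v_2(n)}
    \geq \min\mkakko*{0,\, v_2\skakko*{\lkakko*{0}^{+}}}.
\end{align}
The main obstacle is the first step: upgrading ``$\Lambda(k/M)$ is torsion mod $\calL_f$'' to the exact membership $\Lambda(k/M)-\Lambda(0)\in\calL_f$, which rests entirely on the cusp equivalence $k/M\sim 0$ and hence on the coprimality $\gcd(M,N)=1$. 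The second point to verify honestly is the coupling of the two half-integer parts through a single $b$, since this cancellation is what makes the bound hold with the cusp-independent right-hand side $\min\{0,v_2\lkakko*{0}^{+}\}$.
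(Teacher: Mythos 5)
Your proof is correct and takes essentially the same route as the paper's: both use $\gcd(M,kN)=1$ to produce an explicit matrix in $\Gamma_0(N)$ identifying the cusps $k/M$ and $0$, deduce $2\pi i\int_{i\infty}^{k/M}f(z)\,dz-2\pi i\int_{i\infty}^{0}f(z)\,dz\in\calL_f$, and then read off the integrality claims (including the coupling of the two half-integer parts through a single lattice coordinate in the non-rectangular case) from the basis in \eqref{eq:Latticef}. Your Abel--Jacobi packaging is only a cosmetic difference: the paper phrases the identical step as a direct path decomposition $\int_{i\infty}^{\gamma^{\pm}\cdot 0}=\int_{i\infty}^{0}+\int_{0}^{\gamma^{\pm}\cdot 0}$ together with the $\Gamma_0(N)$-description of $\calL_f$, which in particular sidesteps any need to normalize $\Phi(i\infty)=O$.
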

\begin{proof}
    Since $(M, kN)=1$, we can take $a, b\in\bbZ$ such that $aM-bkN=1$.
    Then the matrix
    \begin{align}
        \gamma^{+}\coloneqq
        \begin{pmatrix}
            a & k \\
            bN & M
        \end{pmatrix}
    \end{align}
    belongs to $\Gamma_0(N)$ and $\gamma^{+}\cdot 0=k/M$ holds.
    Similarly, there exists $\gamma^{-}\in\Gamma_0(N)$ such that $\gamma^{-}\cdot 0=-k/M$.
    Therefore it holds that
    \begin{align}
        \Omega_{f}^{+}\lkakko*{\frac{k}{M}}^{+}\pm i\Omega_{f}^{-}\lkakko*{\frac{k}{M}}^{-}
        &=2\pi i\int_{i\infty}^{\gamma^{\pm}\cdot 0}f(z)dz\\
        &=2\pi i\int_{i\infty}^{0}f(z)dz+2\pi i\int_{0}^{\gamma^{\pm}\cdot 0}f(z)dz\\
        &=\Omega_{f}^{+}\lkakko*{0}^{+}+2\pi i\int_{0}^{\gamma^{\pm}\cdot 0}f(z)dz.
    \end{align}
    Thus we have
    \begin{align}
        \Omega_{f}^{+}\skakko*{\lkakko*{\frac{k}{M}}^{+}-\lkakko*{0}^{+}}\pm i\Omega_{f}^{-}\lkakko*{\frac{k}{M}}^{-}
        \in \calL_{f}.
    \end{align}
    and the claim follows by the equality \eqref{eq:Latticef}.
\end{proof}

For each prime $p$ not dividing $N$, the Hecke operator $T_{p}$ acts on modular symbols as follows:
\begin{align}
    T_{p}\lkakko*{r}^{\pm}=\lkakko*{pr}^{\pm}+\sum_{k\in\bbZ/p\bbZ}\lkakko*{\frac{k+r}{p}}^{\pm}.
\end{align}
Moreover, since $f$ is a Hecke eigenform, we see that $T_p\lkakko*{r}^{\pm}=a_p\lkakko*{r}^{\pm}$, where $a_p$ is the $p$-th Fourier coefficient of $f$.

We put
\begin{align}
    \calT_{M}\coloneqq \sum_{k\in(\bbZ/M\bbZ)^\times}\chi_{M}(k)\lkakko*{\frac{k}{M}}^{\sgn(\chi_{M})},
\end{align}
$\calT_{1}\coloneqq \lkakko*{0}^{+}=L(f, 1)/\Omega_{f}^{+}$ and $\calT_{4}:=2\lkakko*{1/4}^-=\tau(\chi_4)L(f,\chi_4,1)\slash\Omega_f^-$.
Moreover, for each positive divisor $D$ of $M$, we set
\begin{align}
    \calT_{D, M}\coloneqq \sum_{k\in(\bbZ/M\bbZ)^\times}\chi_{D}(k)\lkakko*{\frac{k}{M}}^{\sgn(\chi_{D})}.
\end{align}
Obviously, we have $\calT_{M, M}=\calT_{M}$. By Theorem \ref{thm:ShimuraManin}, $\calT_{M}$ is essentially equal to the algebraic part of $L(f, \chi_{M}, 1)$. First, we show that $\calT_{D, M}$ is also essentially
equal to the algebraic part of $L(f, \chi_{D}, 1)$. (See Proposition \ref{prop:TDD}.)

\begin{lem}\label{lem:TDM}
    For an odd prime $q$ dividing $M$ and a positive divisor $D$ of $M/q$, we have
    \begin{align}
        \calT_{D, M}=(a_{q}-2\chi_{D}(q))\calT_{D, \frac{M}{q}}.
    \end{align}
\end{lem}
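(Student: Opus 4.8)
The plan is to push the Hecke operator $T_q$ through the quantity $\calT_{D, M/q}$ and compare the result with $\calT_{D, M}$. Write $M' \coloneqq M/q$; since $M$ is a product of $4^n$ and distinct odd primes and $D \mid M'$, we have $(q, M') = 1$ and $q \nmid D$. Because $D$ is fixed, the sign $\sgn(\chi_D)$ occurring in both $\calT_{D, M}$ and $\calT_{D, M'}$ is one and the same; I abbreviate it by $\pm$ below, and every modular symbol that appears carries this single sign, so the Hecke action (which preserves the sign) applies uniformly.

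First I would use the eigenform relation $T_q \lkakko*{r}^{\pm} = a_q \lkakko*{r}^{\pm}$ together with the explicit Hecke action quoted before the statement. Applied to $r = j/M'$ with $(j, M') = 1$, and using the identity $\frac{k + j/M'}{q} = \frac{j + kM'}{M}$, this reads
\begin{align}
    a_q \lkakko*{\frac{j}{M'}}^{\pm} = \lkakko*{\frac{qj}{M'}}^{\pm} + \sum_{k=0}^{q-1} \lkakko*{\frac{j + kM'}{M}}^{\pm}.
\end{align}
Multiplying by $\chi_D(j)$ and summing over $j \in (\bbZ/M'\bbZ)^\times$ produces $a_q \calT_{D, M'}$ on the left and two sums on the right, which I would evaluate separately.

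For the first sum, I reindex by $j' = qj$: as $(q, M') = 1$ this permutes $(\bbZ/M'\bbZ)^\times$, and since $\chi_D$ is quadratic one has $\chi_D(j) = \chi_D(q)\chi_D(j')$, so the first sum equals $\chi_D(q)\calT_{D, M'}$. For the second sum the key technical step is a change of variables via the Chinese Remainder Theorem: the map $(j, k) \mapsto n \equiv j + kM' \bmod M$ is a bijection $\bbZ/M'\bbZ \times \bbZ/q\bbZ \xrightarrow{\sim} \bbZ/M\bbZ$, under which the constraint $(j, M') = 1$ becomes $(n, M') = 1$, and $n \equiv j \bmod D$ forces $\chi_D(n) = \chi_D(j)$. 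Hence the double sum becomes $\sum_{(n, M') = 1} \chi_D(n) \lkakko*{n/M}^{\pm}$.

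The point requiring the most care is then separating this last sum according to the residue of $n$ modulo $q$, since the coprimality condition $(n,M')=1$ is weaker than $(n,M)=1$. Splitting it into the part with $(n, M) = 1$, which is exactly $\calT_{D, M}$, and the part with $q \mid n$, where writing $n = q\ell$ gives $\lkakko*{n/M}^{\pm} = \lkakko*{\ell/M'}^{\pm}$ and $\chi_D(n) = \chi_D(q)\chi_D(\ell)$ with $\ell$ ranging over $(\bbZ/M'\bbZ)^\times$, the latter contributes another $\chi_D(q)\calT_{D, M'}$. Assembling the three pieces yields
\begin{align}
    a_q \calT_{D, M'} = \chi_D(q)\calT_{D, M'} + \calT_{D, M} + \chi_D(q)\calT_{D, M'},
\end{align}
and solving for $\calT_{D, M}$ gives $\calT_{D, M} = (a_q - 2\chi_D(q))\calT_{D, M'}$, as claimed.
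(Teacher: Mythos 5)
Your proof is correct and is essentially the paper's own argument: both rest on the Hecke relation $T_q$ applied at the points $j/M' = qj/M$, summed against $\chi_D$, with the CRT identification of $(\bbZ/M\bbZ)^\times$ and the substitution $j \mapsto qj$ producing the two $\chi_D(q)\calT_{D, M/q}$ correction terms. The only difference is directional bookkeeping — you expand $a_q\calT_{D, M/q}$ and solve for $\calT_{D,M}$, while the paper starts from $\calT_{D,M}$, completes the inner sum over $k' \in \bbZ/q\bbZ$, and substitutes the Hecke identity to reduce to $\calT_{D, M/q}$.
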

\begin{proof}
    We can identify $(\bbZ/M\bbZ)^\times$ with the set
    \begin{align}
        \set*{k'\frac{M}{q}+k}{k\in\skakko*{\bbZ/(M/q)\bbZ}^{\times}, \ k'\in\skakko*{\bbZ/q\bbZ}^{\times}}.
    \end{align}
    Therefore it holds that
     \begin{align}
        \calT_{D, M}
        &=\sum_{k\in(\bbZ/(M/q)\bbZ)^{\times}}\sum_{k'\in(\bbZ/q\bbZ)^{\times}}\chi_{D}\skakko*{k}\lkakko*{\frac{k'\frac{M}{q}+k}{M}}^{\sgn(\chi_{D})}\\
        &=\sum_{k\in(\bbZ/(M/q)\bbZ)^{\times}}\skakko*{\sum_{k'\in\bbZ/q\bbZ}\chi_{D}\skakko*{k}\lkakko*{\frac{k'\frac{M}{q}+k}{M}}^{\sgn(\chi_{D})}-\chi_{D}(k)\lkakko*{\frac{k}{M}}^{\sgn(\chi_{D})}}\\
        &=\begin{multlined}[t][12cm]
            \sum_{k\in(\bbZ/(M/q)\bbZ)^{\times}}\chi_{D}(k)\sum_{k'\in\bbZ/q\bbZ}\lkakko*{\frac{k'\frac{M}{q}+k}{M}}^{\sgn(\chi_{D})}\\
            -\sum_{k\in(\bbZ/(M/q)\bbZ)^{\times}}\chi_{D}(qk)\lkakko*{\frac{kq}{M}}^{\sgn(\chi_{D})}.
        \end{multlined}
        \label{eq:TDM-1}
    \end{align}
    Note that the last equality follows from the transformation $k\mapsto qk$ in the second term. We temporarily fix $k\in (\bbZ/(M/q)\bbZ)^{\times}$.
    Considering the action of the Hecke operator $T_{q}$ at the prime $q$, we have
    \begin{align}
        a_{q}\lkakko*{\frac{kq}{M}}^{\sgn(\chi_{D})}
        &=\lkakko*{\frac{kq^2}{M}}^{\sgn(\chi_{D})}+\sum_{k'\in\bbZ/q\bbZ}\lkakko*{\frac{k'\frac{M}{q}+k}{M}}^{\sgn(\chi_{D})}.\label{eq:TDM-2}
    \end{align}
    Taking the sum $\sum_{k\in(\bbZ/(M/q)\bbZ)^{\times}}\chi_{D}(k)$ in the equation \eqref{eq:TDM-2} yields the identity
       \begin{align}
        \begin{multlined}[t][12cm]
            \sum_{k\in(\bbZ/(M/q)\bbZ)^{\times}}\chi_{D}(k)\sum_{k'\in\bbZ/q\bbZ}\lkakko*{\frac{k'\frac{M}{q}+k}{M}}^{\sgn(\chi_{D})}\\
            =(a_{q}-\chi_{D}(q))\sum_{k\in(\bbZ/(M/q)\bbZ)^{\times}}\chi_{D}(k)\lkakko*{\frac{kq}{M}}^{\sgn(\chi_{D})}.\label{eq:TDM-3}
        \end{multlined}
    \end{align}
    Substituting the right-hand side of the equation \eqref{eq:TDM-3} into the right-hand side of the equation \eqref{eq:TDM-1} leads to the claim.
\end{proof}

\begin{prop}\label{prop:TDD}
    For a positive divisor $D$ of $M$, we have
    \begin{align}
        \calT_{D, M}=\calT_{D}\cdot\prod_{q\mid \frac{M}{D}}(a_{q}-2\chi_{D}(q)).
    \end{align}
    In particular, we have
    \begin{align}
        \calT_{1, m}=\calT_{1}\prod_{i=1}^{r}(a_{q_i}-2), \quad
        \calT_{4, 4m}=\calT_{4}\prod_{i=1}^{r}(a_{q_i}-2\chi_{4}(q)).
    \end{align}
\end{prop}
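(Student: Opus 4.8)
The plan is to deduce the product formula from Lemma~\ref{lem:TDM} by an induction that strips off, one prime at a time, the odd primes dividing $M/D$. First I would record the base case directly from the definitions: the defining sum for $\calT_D$ is precisely $\calT_{D,M}$ specialized to $M=D$, so $\calT_{D,D}=\calT_D$. Since the empty product equals $1$, the asserted identity holds whenever $M=D$, which anchors the induction.

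For the inductive step I would argue on the number $s$ of odd prime divisors of $M/D$. Assuming $s\geq 1$, choose an odd prime $q\mid M/D$. Because the odd part of $M$ is square-free and $q\nmid D$, the divisor $D$ divides $M/q$, so all hypotheses of Lemma~\ref{lem:TDM} are satisfied and
\begin{align}
    \calT_{D,M}=(a_q-2\chi_D(q))\,\calT_{D,\frac{M}{q}}.
\end{align}
Now $M/q$ is again a multiple of $D$, the quotient $(M/q)/D$ has exactly $s-1$ odd prime divisors, and the character $\chi_D$ is unchanged throughout. Hence the induction hypothesis applies to $\calT_{D,M/q}$, giving $\calT_{D,M/q}=\calT_D\prod_{q'\mid (M/q)/D}(a_{q'}-2\chi_D(q'))$, and multiplying by the factor $(a_q-2\chi_D(q))$ reassembles the product over all odd primes dividing $M/D$. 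This is exactly the claimed formula.

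Finally I would read off the two displayed special cases. Taking $D=1$, so that $\chi_1$ is trivial and $\chi_1(q)=1$ for every $q\mid m$, together with $M=m$, yields $\calT_{1,m}=\calT_1\prod_{i=1}^r(a_{q_i}-2)$; taking $D=4$ and $M=4m$, so that $M/D=m$ is odd, yields $\calT_{4,4m}=\calT_4\prod_{i=1}^r(a_{q_i}-2\chi_4(q_i))$.

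As for difficulty, once Lemma~\ref{lem:TDM} is in hand the argument is essentially bookkeeping, and I expect no genuine obstacle. The only points requiring care are verifying at each step that the prime being removed is odd, divides the current modulus, and does not divide $D$ (so that $D\mid M/q$ and the lemma applies with the \emph{same} character $\chi_D$), and observing that the induction terminates at $\calT_{D,D}$ precisely because $M/D$ is odd in the cases of interest. The entire content sits in the Hecke-operator identity already established in Lemma~\ref{lem:TDM}; no estimate or analytic input is needed here.
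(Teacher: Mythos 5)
Your proof is correct and takes essentially the same route as the paper, whose entire proof is that the claim ``follows by using Lemma \ref{lem:TDM} iteratively''---precisely your induction stripping one odd prime of $M/D$ at a time, anchored at $\calT_{D,D}=\calT_D$. Your write-up is in fact more careful than the paper's, e.g.\ in checking that $D\mid M/q$ so the lemma applies with the same character, and in noting that the induction terminates because $M/D$ is odd in the relevant cases.
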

\begin{proof}
    The claim follows by using Lemma \ref{lem:TDM} iteratively.
\end{proof}

In the rest of the paper, for some function $F$, $\sum_{d\mid m}F(d)$ denotes the summation over all positive divisor $d$ of $m$. Moreover, $\sum_{D}'F(D)$ denotes $\sum_{d\mid m}F(d)$ if $n=0$ and $\sum_{d\mid m}F(4d)$ if $n=1$. For example, the sum $\sum_{D}'\calT_{D, M}$ is given by
\begin{align}
    \sideset{}{'}\sum_{D}\calT_{D, M}=\begin{cases}
        \sum_{d\mid m}\calT_{d, m} & (n=0),\\
        \sum_{d\mid m}\calT_{4d, 4m} & (n=1).
    \end{cases}
\end{align}

Next, we calculate a lower bound for $v_2(\sum_{D}'\calT_{D, M})$ in Proposition \ref{prop:SumBound1} and Proposition \ref{prop:SumBound2}. Proposition \ref{prop:SumBound1} gives a general lower bound. Proposition \ref{prop:SumBound2} states it in the case where $\sgn(\chi_{q})$ is $+1$ for all odd prime divisors $q$ of $M$.

\begin{lem}\label{lem:CharacterSum}
    For each $k\in (\bbZ/M\bbZ)^{\times}$, the following holds:
    \begin{align}
        \sideset{}{'}\sum_{D}\chi_{D}(k)=
        \begin{cases}
            \prod_{i=1}^{r}(1+\chi_{q_i}(k)) & (n=0),\\
            \chi_{4}(k)\prod_{i=1}^{r}(1+\chi_{q_i}(k)) & (n=1).
        \end{cases}
    \end{align}
    In particular, we have $v_2(\sum_{D}'\chi_{D}(k))\geq r$.
\end{lem}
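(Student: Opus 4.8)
The plan is to reduce the divisor sum to a sum over subsets of $\{1,\dots,r\}$ and then recognize a product expansion. The structural input I would use is that, since $m=q_1\cdots q_r$ is a product of distinct odd primes, every positive divisor $d$ of $m$ is squarefree and corresponds bijectively to a subset $S\subseteq\{1,\dots,r\}$ via $d=\prod_{i\in S}q_i$; moreover the primitive quadratic character of such a conductor factors as $\chi_d=\prod_{i\in S}\chi_{q_i}$, with the empty product (i.e.\ $S=\emptyset$, $d=1$) giving the trivial character. In the case $n=1$ the primitive quadratic character of conductor $4d$ factors similarly as $\chi_{4d}=\chi_4\prod_{i\in S}\chi_{q_i}$.

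First, for $n=0$, I would rewrite the left-hand side as
\[
    \sum_{d\mid m}\chi_d(k)=\sum_{S\subseteq\{1,\dots,r\}}\ \prod_{i\in S}\chi_{q_i}(k)
\]
and then apply the elementary expansion $\prod_{i=1}^{r}(1+x_i)=\sum_{S\subseteq\{1,\dots,r\}}\prod_{i\in S}x_i$ with $x_i=\chi_{q_i}(k)$ to obtain $\prod_{i=1}^{r}(1+\chi_{q_i}(k))$. For $n=1$ the common factor $\chi_4(k)$ pulls out of every summand, leaving $\chi_4(k)\prod_{i=1}^{r}(1+\chi_{q_i}(k))$; this is exactly the asserted formula in both cases.

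For the valuation bound, I would note that $k\in(\bbZ/M\bbZ)^{\times}$ forces $(k,q_i)=1$, so each $\chi_{q_i}(k)\in\{\pm1\}$ and hence each factor $1+\chi_{q_i}(k)$ equals either $0$ or $2$. Thus the product is either $0$ (when some factor vanishes) or $2^{r}$ (when all factors equal $2$), so its $2$-adic valuation is $+\infty$ or $r$ respectively; in either case it is at least $r$. Since $\chi_4(k)=\pm1$ has valuation $0$, the same bound $v_2(\sum_{D}'\chi_D(k))\geq r$ holds when $n=1$ as well.

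The only genuinely non-formal point is the character factorization $\chi_d=\prod_{i\in S}\chi_{q_i}$ (and $\chi_{4d}=\chi_4\prod_{i\in S}\chi_{q_i}$ when $n=1$), which follows from the decomposition of Dirichlet characters according to the prime-power factorization of the conductor: the unique primitive quadratic character of conductor $d=\prod_{i\in S}q_i$ is the product of the Legendre symbols $\chi_{q_i}$, and at the prime $2$ the relevant quadratic character of conductor $4$ is $\chi_4$. Granting this, the remainder is a one-line combinatorial identity together with an elementary count of $2$-adic valuations, so I anticipate no substantial obstacle.
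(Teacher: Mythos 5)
Your proof is correct and is essentially the paper's own argument: the paper disposes of this lemma with the single line ``induction on the number of prime divisors of $m$'', and the inductive step there is exactly your product expansion unrolled, since splitting divisors according to whether $q_r\mid d$ gives $\sum_{d\mid m}\chi_d(k)=(1+\chi_{q_r}(k))\sum_{d\mid m/q_r}\chi_d(k)$. Your explicit treatment of the character factorization $\chi_d=\prod_{i\in S}\chi_{q_i}$ (and $\chi_{4d}=\chi_4\prod_{i\in S}\chi_{q_i}$), together with the observation that each factor $1+\chi_{q_i}(k)$ is $0$ or $2$ so the valuation is $+\infty$ or exactly $r$, just makes explicit what the paper leaves implicit.
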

\begin{proof}
    One can prove the claim by induction
    on the number of prime divisors of $m$.
\end{proof}

\begin{prop}\label{prop:SumBound1}
    We have
    \begin{align}
        v_2\skakko*{\sideset{}{'}\sum_{D}\calT_{D, M}}
        \geq r+\min\mkakko*{0, v_2\skakko*{\calT_{1}}}.
    \end{align}
\end{prop}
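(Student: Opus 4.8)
The plan is to interchange the order of summation and then exploit the symmetry $k\mapsto -k$ together with Lemma \ref{lem:CharacterSum} and Lemma \ref{lem:Integrality}. Swapping the sums gives
\[
\sideset{}{'}\sum_{D}\calT_{D,M}=\sum_{k\in(\bbZ/M\bbZ)^{\times}}F(k),\qquad
F(k)\coloneqq\sideset{}{'}\sum_{D}\chi_{D}(k)\lkakko*{\frac{k}{M}}^{\sgn(\chi_{D})}.
\]
The source of difficulty is that the exponent $\sgn(\chi_{D})$ varies with $D$, so I cannot pull a single modular symbol out of the inner sum and invoke the bound $v_2(\sum_{D}'\chi_{D}(k))\geq r$ of Lemma \ref{lem:CharacterSum} directly; a naive term-by-term estimate loses a factor of $2$ and yields only $r-1$.

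First I would record two elementary facts: $\lkakko*{-r}^{\pm}=\pm\lkakko*{r}^{\pm}$ and $\chi_{D}(-1)=\sgn(\chi_{D})$. Together they show that each summand $\chi_{D}(k)\lkakko*{k/M}^{\sgn(\chi_{D})}$ is unchanged under $k\mapsto -k$, hence so is $F$. Since $M\geq 3$ (the degenerate case $M=1$ being trivial), the involution $k\mapsto -k$ is fixed-point free on $(\bbZ/M\bbZ)^{\times}$, so for any set $P$ of representatives of the pairs $\{k,-k\}$ we obtain
\[
\sideset{}{'}\sum_{D}\calT_{D,M}=2\sum_{k\in P}F(k).
\]
This is the factor of $2$ that will ultimately upgrade the exponent from $r-1$ to $r$.

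Next I would bound $v_2(F(k))$ for fixed $k$. Writing $\tilde{A}_{+}$ and $\tilde{A}_{-}$ for the contributions to $\sum_{D}'\chi_{D}(k)$ coming from the $D$ with $\sgn(\chi_{D})=+1$ and $\sgn(\chi_{D})=-1$ respectively, we have $\tilde{A}_{+}+\tilde{A}_{-}=\sum_{D}'\chi_{D}(k)$ and $\tilde{A}_{+}-\tilde{A}_{-}=\sum_{D}'\sgn(\chi_{D})\chi_{D}(k)=\sum_{D}'\chi_{D}(-k)$ (both valid uniformly for $n=0,1$, as they only use $\chi_D(-1)=\sgn(\chi_D)$). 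These let me rewrite
\[
F(k)=\frac{1}{2}\skakko*{\sideset{}{'}\sum_{D}\chi_{D}(k)}\skakko*{\lkakko*{\frac{k}{M}}^{+}+\lkakko*{\frac{k}{M}}^{-}}
+\frac{1}{2}\skakko*{\sideset{}{'}\sum_{D}\chi_{D}(-k)}\skakko*{\lkakko*{\frac{k}{M}}^{+}-\lkakko*{\frac{k}{M}}^{-}}.
\]
By Lemma \ref{lem:CharacterSum} both character sums have $v_2\geq r$, while Lemma \ref{lem:Integrality} gives $v_2(\lkakko*{k/M}^{+}+\lkakko*{k/M}^{-})\geq\min\{0,v_2(\calT_1)\}$; applying the same lemma at $-k$ and using $\lkakko*{k/M}^{+}-\lkakko*{k/M}^{-}=\lkakko*{-k/M}^{+}+\lkakko*{-k/M}^{-}$ bounds the second combination identically. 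Accounting for the two factors $\tfrac12$, this yields $v_2(F(k))\geq (r-1)+\min\{0,v_2(\calT_1)\}$.

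Finally, combining the per-$k$ bound with the factor of $2$ from the pairing gives
\[
v_2\skakko*{\sideset{}{'}\sum_{D}\calT_{D,M}}=1+v_2\skakko*{\sum_{k\in P}F(k)}\geq 1+(r-1)+\min\mkakko*{0,v_2(\calT_1)}=r+\min\mkakko*{0,v_2(\calT_1)},
\]
which is the assertion. I expect the main obstacle to be exactly the bookkeeping forced by the $D$-dependent sign $\sgn(\chi_{D})$: it blocks a direct use of Lemma \ref{lem:CharacterSum} and costs a factor of $2$, and the crux of the argument is the observation that the $k\leftrightarrow -k$ symmetry supplies precisely the missing factor.
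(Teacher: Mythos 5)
You are correct, and your argument is essentially the paper's: summing your per-$k$ identity for $F(k)$ over $k\in(\bbZ/M\bbZ)^{\times}$ (the second half becomes the first under $k\mapsto-k$) recovers exactly the paper's key identity \eqref{eq:SumBound1-1}, namely $\sideset{}{'}\sum_{D}\calT_{D,M}=\sum_{k}\skakko*{\sideset{}{'}\sum_{D}\chi_{D}(k)}\skakko*{\lkakko*{\frac{k}{M}}^{+}+\lkakko*{\frac{k}{M}}^{-}}$, after which both proofs conclude from Lemma \ref{lem:CharacterSum} and Lemma \ref{lem:Integrality}. The only difference is bookkeeping: the paper symmetrizes globally by observing that the cross-term sums vanish under $k\mapsto-k$, while you symmetrize pointwise via the fixed-point-free pairing $\{k,-k\}$, your two factors $\tfrac{1}{2}$ being offset by the overall factor $2$.
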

\begin{proof}
    From the facts that $(-1)$-multiplication map on $(\bbZ/M\bbZ)^{\times}$ is bijective
    and $[-r]^{\pm}=\pm[r]^{\pm}$ holds for each $r\in\bbQ$, we see that
    \begin{align}
        &\sum_{k\in(\bbZ/M\bbZ)^\times}\sideset{}{'}\sum_{\substack{D \\ \sgn(\chi_{D})=+}}\chi_{D}(k)\lkakko*{\frac{k}{M}}^{-\sgn(\chi_{D})}=0,\\
        &\sum_{k\in(\bbZ/M\bbZ)^\times}\sideset{}{'}\sum_{\substack{D \\ \sgn(\chi_{D})=-}}\chi_{D}(k)\lkakko*{\frac{k}{M}}^{\sgn(\chi_{D})}
        =0.
    \end{align}
    Therefore we have
    \begin{align}
        \sideset{}{'}\sum_{D}\calT_{D, M}
        &=\sum_{k\in(\bbZ/M\bbZ)^{\times}}\sideset{}{'}\sum_{D}\chi_{D}(k)\lkakko*{\frac{k}{M}}^{\sgn(\chi_{D})}\\
        &=\begin{multlined}[t][11cm]
            \sum_{k\in(\bbZ/M\bbZ)^{\times}}\sideset{}{'}\sum_{\substack{D \\ \sgn(\chi_{D})=+}}\chi_{D}(k)\lkakko*{\frac{k}{M}}^{\sgn(\chi_{D})}\\
            +\sum_{k\in(\bbZ/M\bbZ)^{\times}}\sideset{}{'}\sum_{\substack{D \\ \sgn(\chi_{D})=-}}\chi_{D}(k)\lkakko*{\frac{k}{M}}^{\sgn(\chi_{D})}
        \end{multlined}\\
        &=\begin{multlined}[t][11cm]
            \sum_{k\in(\bbZ/M\bbZ)^{\times}}\sideset{}{'}\sum_{\substack{D \\ \sgn(\chi_{D})=+}}\chi_{D}(k)\lkakko*{\frac{k}{M}}^{+}\\
            +\sum_{k\in(\bbZ/M\bbZ)^{\times}}\sideset{}{'}\sum_{\substack{D \\ \sgn(\chi_{D})=-}}\chi_{D}(k)\lkakko*{\frac{k}{M}}^{-}
        \end{multlined}\\
        &=\sum_{k\in(\bbZ/M\bbZ)^{\times}}\skakko*{\sideset{}{'}\sum_{D}\chi_{D}(k)}\skakko*{\lkakko*{\frac{k}{M}}^{+}+\lkakko*{\frac{k}{M}}^{-}}\label{eq:SumBound1-1}
    \end{align}

    The claim follows from Lemma \ref{lem:Integrality} and Lemma \ref{lem:CharacterSum}.
\end{proof}

\begin{prop}\label{prop:SumBound2}
    Suppose that $\sgn(\chi_{q})$ is $+1$ for all odd prime divisors $q$ of $M$.
    \begin{enumerate}
        \item If $n=0$, then
        \begin{align}
            v_2\skakko*{\sum_{d\mid m}\calT_{d, m}}\geq
            \begin{cases}
                r+1+\min\mkakko*{0, v_2\skakko*{\calT_{1}}} & (\calL_{f}: \text{rectangular}),\\
                r+1+\min\mkakko*{-1, v_2\skakko*{\calT_{1}}} & (\calL_{f}: \text{non-rectangular}).
            \end{cases}
        \end{align}
        \item If $n=1$, then
        \begin{align}
            v_2\skakko*{\sum_{d\mid m}\calT_{4d, 4m}}\geq
            \begin{cases}
                r+1 & (\calL_{f}: \text{rectangular}),\\
                r & (\calL_{f}: \text{non-rectangular}).
            \end{cases}
        \end{align}
    \end{enumerate}
\end{prop}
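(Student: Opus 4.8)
The plan is to return to the identity established in the proof of Proposition \ref{prop:SumBound1} (equation \eqref{eq:SumBound1-1}), which, writing $c_k\coloneqq \sideset{}{'}\sum_{D}\chi_{D}(k)$, reads
\begin{align}
    \sideset{}{'}\sum_{D}\calT_{D, M}
    =\sum_{k\in(\bbZ/M\bbZ)^{\times}} c_k\skakko*{\lkakko*{\frac{k}{M}}^{+}+\lkakko*{\frac{k}{M}}^{-}}.
\end{align}
The extra powers of $2$ promised in the statement should come entirely from the behaviour of $c_k$ under $k\mapsto -k$, and this is exactly where the hypothesis $\sgn(\chi_q)=+1$, i.e.\ $q\equiv 1\bmod 4$, for every odd prime divisor $q$ enters: since $\chi_{q_i}(-1)=+1$ and $\chi_4(-1)=-1$, Lemma \ref{lem:CharacterSum} gives $c_{-k}=c_k$ when $n=0$ and $c_{-k}=-c_k$ when $n=1$, while $v_2(c_k)\geq r$ in both cases.

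I would treat the case $n=1$ first, as it is the cleaner one. Combining $\lkakko*{-r}^{\pm}=\pm\lkakko*{r}^{\pm}$ with $c_{-k}=-c_k$, the substitution $k\mapsto -k$ forces $\sum_{k}c_k\lkakko*{k/M}^{+}=0$, so only the odd part survives; pairing each $k$ with $-k$ then extracts one factor of $2$:
\begin{align}
    \sum_{d\mid m}\calT_{4d, 4m}
    =\sum_{k\in(\bbZ/M\bbZ)^{\times}}c_k\lkakko*{\frac{k}{M}}^{-}
    =2\sum_{k\in R}c_k\lkakko*{\frac{k}{M}}^{-},
\end{align}
where $R$ is a set of representatives for $(\bbZ/M\bbZ)^{\times}/\{\pm1\}$. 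Each summand has $v_2\geq r-v_2(\nu)$ by $v_2(c_k)\geq r$ together with the integrality $\nu\lkakko*{k/M}^{-}\in\bbZ$ of Lemma \ref{lem:Integrality}, so the displayed factor $2$ gives $v_2\geq 1+r-v_2(\nu)$, that is $r+1$ when $\calL_f$ is rectangular ($\nu=1$) and $r$ when it is non-rectangular ($\nu=2$).

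For $n=0$ the same mechanism runs with the opposite parity: now $c_{-k}=c_k$ annihilates the odd part, leaving $\sum_{d\mid m}\calT_{d,m}=2\sum_{k\in R}c_k\lkakko*{k/M}^{+}$. Here I would split $\lkakko*{k/M}^{+}=(\lkakko*{k/M}^{+}-\calT_1)+\calT_1$ (recall $\calT_1=\lkakko*{0}^{+}$) and use $\sum_{k\in R}c_k=\tfrac12\prod_{i=1}^{r}(q_i-1)$, which follows from $\sum_{k}\chi_d(k)=0$ for $d\neq 1$:
\begin{align}
    \sum_{d\mid m}\calT_{d, m}
    =2\sum_{k\in R}c_k\skakko*{\lkakko*{\frac{k}{M}}^{+}-\calT_{1}}
    +\calT_{1}\prod_{i=1}^{r}(q_i-1).
\end{align}
The first term has $v_2\geq 1+r-v_2(\nu)$ from Lemma \ref{lem:Integrality} (as $\nu(\lkakko*{k/M}^{+}-\lkakko*{0}^{+})\in\bbZ$), and the second has $v_2\geq v_2(\calT_1)+2r$ because $v_2(q_i-1)\geq 2$ for each $q_i\equiv 1\bmod 4$ — the second and last use of the congruence. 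Thus $v_2\geq \min\mkakko*{1+r-v_2(\nu),\, v_2(\calT_1)+2r}$, and an elementary case check (splitting on the sign of $v_2(\calT_1)+v_2(\nu)$ and using $r\geq 1$) shows this is at least $r+1+\min\mkakko*{-v_2(\nu),\, v_2(\calT_1)}$, which is precisely the asserted bound in both lattice cases.

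The conceptual core, and the only genuinely new ingredient, is the parity observation: forcing every $q_i\equiv 1\bmod 4$ makes $c_k$ purely even or purely odd, which at once decouples $\lkakko*{k/M}^{+}$ from $\lkakko*{k/M}^{-}$ and produces a free power of $2$ through the $k\leftrightarrow -k$ pairing. I expect the only real friction to be the final minimum comparison when $n=0$: one must verify that the $\calT_1$-term, which carries the extra $2r$ powers of $2$, never undercuts the target. After splitting on the sign of $v_2(\calT_1)+v_2(\nu)$ this collapses to the single inequality $2r\geq r+1$, which holds since $r\geq 1$.
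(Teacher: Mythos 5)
Your proof is correct and is essentially the paper's own argument: starting from the identity \eqref{eq:SumBound1-1}, you exploit the parity of $c_k=\sum_{D}'\chi_D(k)$ under $k\mapsto -k$ (which is exactly how the paper uses the hypothesis $q_i\equiv 1\bmod 4$, via the pairing $k\leftrightarrow 4m-k$ in its $n=1$ proof) to kill one of $\lkakko*{k/M}^{\pm}$ and extract a factor of $2$, then conclude with Lemma \ref{lem:CharacterSum} and Lemma \ref{lem:Integrality}. The only cosmetic deviation is in the $n=0$ case, where you split off $\calT_1$ and evaluate $\sum_k c_k=\#(\bbZ/m\bbZ)^\times$ before a final min-comparison, whereas the paper's (omitted, ``similar'') argument gets the stated bound directly from $v_2(\lkakko*{k/m}^{+})\geq\min\mkakko*{-v_2(\nu), v_2(\calT_1)}$ applied termwise; both are valid, and your detour even yields a marginally stronger intermediate estimate.
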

\begin{proof}
    We only prove the case where $n=1$.
    (For $n=0$, one can prove similarly.) Note that we have the identification
    \begin{align}
    (\bbZ\slash 4m\bbZ)^{\times}=\set*{4k+m,4k-m}{k\in (\bbZ\slash m\bbZ)^{\times}}.
    \end{align}
    From the equation \eqref{eq:SumBound1-1}, we have
    \begin{align}
        \sum_{d\mid m}\calT_{4d, 4m}
        &=\sum_{\substack{k=1 \\ (k, 4m)=1}}^{4m}\skakko*{\sum_{d\mid m}\chi_{4d}(k)}\skakko*{\lkakko*{\frac{k}{4m}}^{+}+\lkakko*{\frac{k}{4m}}^{-}}\\
        &=
        \begin{multlined}[t][11cm]
            \sum_{\substack{k=1 \\ (k, 4m)=1}}^{2m}\skakko*{\sum_{d\mid m}\chi_{4d}(k)}\skakko*{\lkakko*{\frac{k}{4m}}^{+}+\lkakko*{\frac{k}{4m}}^{-}}\\
            +\sum_{\substack{k=1 \\ (k, 4m)=1}}^{2m}\skakko*{\sum_{d\mid m}\chi_{4d}(4m-k)}\skakko*{\lkakko*{\frac{4m-k}{4m}}^{+}+\lkakko*{\frac{4m-k}{4m}}^{-}}
        \end{multlined}
        \\
        &=
        \begin{multlined}[t][11cm]
            \sum_{\substack{k=1 \\ (k, 4m)=1}}^{2m}\skakko*{\sum_{d\mid m}\chi_{4d}(k)}\skakko*{\lkakko*{\frac{k}{4m}}^{+}+\lkakko*{\frac{k}{4m}}^{-}}\\
            +\sum_{\substack{k=1 \\ (k, 4m)=1}}^{2m}\skakko*{\sum_{d\mid m}\chi_{4d}(-k)}\skakko*{\lkakko*{\frac{k}{4m}}^{+}-\lkakko*{\frac{k}{4m}}^{-}}.
        \end{multlined}
    \end{align}    
    The claim follows from Lemma \ref{lem:Integrality} and Lemma \ref{lem:CharacterSum}.
\end{proof}

Finally, we calculate a lower bound for $\calT_{1}$ and $\calT_{4}$ in Proposition \ref{prop:BeforeTwist}. This proposition is related to whether the assumptions of the main theorem are empty or not. The details are mentioned in Remark \ref{rem:Assumption}.

\begin{lem}\label{lem:BeforeTwist}
    The rational numbers
    \begin{align}
        \calT_{1, m}-\#(\bbZ/m\bbZ)^\times\cdot\calT_{1}, \quad
        \calT_{4, 4m}
    \end{align}
    belong to $2\bbZ$ (resp. $\bbZ$) if $\calL_{f}$ is rectangular (resp. non-rectangular).
\end{lem}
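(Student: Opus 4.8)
The plan is to expand both quantities directly from the definitions, reduce them to sums of modular symbols governed by Lemma \ref{lem:Integrality}, and then exploit the involution $k\mapsto -k$ to produce an extra factor of $2$ precisely in the rectangular case.

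For the first quantity, since $\chi_1$ is the trivial character and $\sgn(\chi_1)=+$, I would rewrite
\begin{align}
    \calT_{1,m}-\#(\bbZ/m\bbZ)^\times\cdot\calT_{1}
    =\sum_{k\in(\bbZ/m\bbZ)^\times}\skakko*{\lkakko*{\frac{k}{m}}^{+}-\lkakko*{0}^{+}}.
\end{align}
Lemma \ref{lem:Integrality}, applied with $m$ in place of $M$ (legitimate since $(m,N)=1$), gives $\nu\skakko*{\lkakko*{k/m}^{+}-\lkakko*{0}^{+}}\in\bbZ$, where $\nu=1$ in the rectangular case and $\nu=2$ otherwise. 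I then pair the summation index $k$ with $m-k$; since $m$ is odd these are distinct classes in $(\bbZ/m\bbZ)^\times$, and the relation $\lkakko*{-r}^{+}=\lkakko*{r}^{+}$ (together with the period-$1$ invariance of $\lkakko*{r}^{+}$ already used in \S\ref{sec:TwistLModSymb}) shows that the two terms in each pair coincide. Hence the sum is twice a sum of terms each lying in $\tfrac{1}{\nu}\bbZ$.

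For the second quantity, since $\sgn(\chi_4)=-$, I would write $\calT_{4,4m}=\sum_{k\in(\bbZ/4m\bbZ)^\times}\chi_4(k)\lkakko*{k/(4m)}^{-}$ and invoke $\nu\lkakko*{k/(4m)}^{-}\in\bbZ$ from Lemma \ref{lem:Integrality}. Pairing $k$ with $4m-k$ (distinct, as $k=2m$ fails to be coprime to $4m$), the two sign flips $\chi_4(-k)=-\chi_4(k)$ and $\lkakko*{-r}^{-}=-\lkakko*{r}^{-}$ cancel, so again the paired terms coincide and $\calT_{4,4m}$ equals twice a sum of terms in $\tfrac{1}{\nu}\bbZ$.

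In both cases the conclusion reads off from the value of $\nu$: when $\calL_f$ is rectangular ($\nu=1$) each paired summand is an integer, so the quantity lies in $2\bbZ$; when $\calL_f$ is non-rectangular ($\nu=2$) the pairing's factor of $2$ exactly absorbs the denominator, so the quantity lies in $\bbZ$. The step requiring the most care is verifying that the sign relations $\lkakko*{-r}^{\pm}=\pm\lkakko*{r}^{\pm}$ combine with $\chi_4(-1)=-1$ so that paired terms \emph{add} rather than cancel; this is exactly where the rectangular/non-rectangular dichotomy and the oddness of $m$ enter, and it is the crux of obtaining the sharp factor of $2$.
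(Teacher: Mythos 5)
Your proof is correct and takes essentially the same approach as the paper: both arguments reduce the claim to Lemma \ref{lem:Integrality} and extract the factor of $2$ from the fixed-point-free involution $k\mapsto -k$ (the paper folds the first sum over $1\leq k\leq (m-1)/2$ and handles $\calT_{4,4m}$ via the identification $(\bbZ/4m\bbZ)^\times=\set*{4k\pm m}{k\in(\bbZ/m\bbZ)^\times}$, which amounts to exactly your pairing $k\leftrightarrow 4m-k$ with the two sign flips $\chi_4(-1)=-1$ and $\lkakko*{-r}^{-}=-\lkakko*{r}^{-}$ cancelling). Your verification that the pairings have no fixed points and that paired terms add rather than cancel matches the paper's computation, so there is nothing to correct.
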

\begin{proof}
    First, we calculate $\calT_{1, m}-\#(\bbZ/m\bbZ)^\times\cdot\calT_{1}$.
    Dividing $\calT_{1, m}$ into two summations, we have
    \begin{align}
        \calT_{1, m}
        &=\sum_{k\in(\bbZ/m\bbZ)^\times}\lkakko*{\frac{k}{m}}^{+}\\
        &=\sum_{\substack{k=1 \\ (k, m)=1}}^{(m-1)/2}\lkakko*{\frac{k}{m}}^{+}+\sum_{\substack{k=1 \\ (k, m)=1}}^{(m-1)/2}\lkakko*{\frac{m-k}{m}}^{+}\\
        &=2\sum_{\substack{k=1 \\ (k, m)=1}}^{(m-1)/2}\lkakko*{\frac{k}{m}}^{+}.
    \end{align}
    Therefore it holds
    \begin{align}
        \calT_{1, m}-2\sum_{\substack{k=1 \\ (k, m)=1}}^{(m-1)/2}\lkakko*{0}^{+}
        =2\sum_{\substack{k=1 \\ (k, m)=1}}^{(m-1)/2}\skakko*{\lkakko*{\frac{k}{m}}^{+}-\lkakko*{0}^{+}}
    \end{align}
    and the claim follows from Lemma \ref{lem:Integrality}.
    Second, we calculate $\calT_{4, 4m}$. By using the identification
    \begin{align}
        (\bbZ/4m\bbZ)^\times=\set*{4k+m, 4k-m}{k\in(\bbZ/m\bbZ)^\times},
    \end{align}
    we have
    \begin{align}
        \calT_{4, 4m}
        &=\sum_{k\in(\bbZ/4m\bbZ)^\times}\chi_{4}(k)\lkakko*{\frac{k}{4m}}^{-}\\
        &=\sum_{k\in(\bbZ/m\bbZ)^\times}\skakko*{\chi_{4}(4k+m)\lkakko*{\frac{4k+m}{4m}}^{-}+\chi_{4}(4k-m)\lkakko*{\frac{4k-m}{4m}}^{-}}\\
        &=\chi_{4}(m)\sum_{k\in(\bbZ/m\bbZ)^\times}\skakko*{\lkakko*{\frac{4k+m}{4m}}^{-}-\lkakko*{\frac{4k-m}{4m}}^{-}}.
    \end{align}
    Since
    \begin{align}
        -\sum_{k\in(\bbZ/m\bbZ)^\times}\lkakko*{\frac{4k-m}{4m}}^{-}
        =\sum_{k\in(\bbZ/m\bbZ)^\times}\lkakko*{\frac{-4k+m}{4m}}^{-}
        =\sum_{k\in(\bbZ/m\bbZ)^\times}\lkakko*{\frac{4k+m}{4m}}^{-},
    \end{align}
    we have
    \begin{align}
        \calT_{4, 4m}=2\chi_{4}(m)\sum_{k\in(\bbZ/m\bbZ)^\times}\lkakko*{\frac{4k+m}{4m}}^{-}.\label{eq:BeforeTwist-1}
    \end{align}
    The claim follows from here.
\end{proof}

We will now show how an even Fourier coefficient forces extra 2-divisibility.

\begin{lem}\label{lem:BeforeTwist2}
    If there exists an odd prime $q$ dividing $m$ such that $a_q$ is even, then we have
    \begin{align}
        \calT_{4, 4m}\in \begin{cases}
            4\bbZ & (\calL_f: \text{rectangular}),\\
            2\bbZ & (\calL_f: \text{non-rectangular}).
        \end{cases}
    \end{align}
\end{lem}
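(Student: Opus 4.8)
The plan is to read the claim straight off the multiplicative factorization of $\calT_{4,4m}$ supplied by Proposition \ref{prop:TDD}, together with the integrality of the base quantity $\calT_4$. Recall that Proposition \ref{prop:TDD} gives
\begin{align}
    \calT_{4, 4m}=\calT_{4}\prod_{i=1}^{r}(a_{q_i}-2\chi_{4}(q_i)).
\end{align}
Since $f$ is defined over $\bbQ$, each Fourier coefficient $a_{q_i}$ is a rational integer and $\chi_4(q_i)\in\{\pm 1\}$, so every factor $a_{q_i}-2\chi_4(q_i)$ lies in $\bbZ$. The whole problem therefore reduces to tracking the $2$-adic behaviour of $\calT_4$ and of this integer product, and the point of the hypothesis will be that an even $a_q$ makes one factor of the product even.

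First I would pin down the integrality of $\calT_4$ directly from its definition $\calT_4=2\lkakko*{1/4}^{-}$. Applying Lemma \ref{lem:Integrality} with $M=4$ and $k=1$ (note $(1,4)=1$) gives $\nu\lkakko*{1/4}^{-}\in\bbZ$, where $\nu=1$ in the rectangular case and $\nu=2$ otherwise. Hence $\calT_4=2\lkakko*{1/4}^{-}\in 2\bbZ$ when $\calL_f$ is rectangular, while $\calT_4\in\bbZ$ when $\calL_f$ is non-rectangular. (Equivalently, this is the $r=0$ instance of Lemma \ref{lem:BeforeTwist}, but the direct route via Lemma \ref{lem:Integrality} avoids any edge-case ambiguity in the empty product.)

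Finally I would extract the extra factor of $2$ from the assumption. If $a_q$ is even for some odd prime $q=q_j$ dividing $m$, then $a_{q_j}-2\chi_4(q_j)$ is an even integer, so the product $\prod_{i=1}^{r}(a_{q_i}-2\chi_4(q_i))$ lies in $2\bbZ$. Multiplying by $\calT_4$ then yields the claim: in the rectangular case $\calT_{4,4m}\in(2\bbZ)(2\bbZ)\subseteq 4\bbZ$, and in the non-rectangular case $\calT_{4,4m}\in\bbZ\cdot(2\bbZ)\subseteq 2\bbZ$. I do not expect a genuine obstacle once the factorization is in hand; the only delicate point is correctly determining the integrality of $\calT_4$ according to the shape of $\calL_f$, since precisely that distinction between $\calT_4\in 2\bbZ$ and $\calT_4\in\bbZ$ is what produces the two cases $4\bbZ$ versus $2\bbZ$ in the conclusion.
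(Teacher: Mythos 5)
Your proof is correct, but it takes a genuinely different route from the paper's. The paper never invokes the factorization of Proposition \ref{prop:TDD} here: instead it starts from the identity \eqref{eq:BeforeTwist-1}, $\calT_{4,4m}=2\chi_4(m)\sum_{k\in(\bbZ/m\bbZ)^\times}\lkakko*{(4k+m)/(4m)}^{-}$, established inside the proof of Lemma \ref{lem:BeforeTwist}, applies the Hecke operator $T_q$ at the even-coefficient prime $q$ to rewrite that sum as $a_q\sum(\cdots)-2\sum(\cdots)$, and then concludes term by term from Lemma \ref{lem:Integrality}. In effect the paper re-derives, inline and in symmetrized form, exactly the $q$-step of Lemma \ref{lem:TDM} that you instead import wholesale through Proposition \ref{prop:TDD}; since Proposition \ref{prop:TDD} is proved independently (via Lemma \ref{lem:TDM} only), there is no circularity in your shortcut, and your argument is both valid and more economical: it isolates the extra $2$-divisibility in the single integer factor $a_q-2\chi_4(q)$ and reduces everything else to the integrality of $\calT_4$ alone, rather than tracking divisibility of a whole sum of modular symbols. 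The one point worth flagging is your application of Lemma \ref{lem:Integrality} with modulus $4$: the lemma is stated for the fixed modulus $M=4^nm$, so strictly you are using that its proof works verbatim for any modulus coprime to $N$ (which holds here, since the $n=1$ setting forces $N$ odd, so $(4,N)=1$); alternatively, as you note, $\calT_4=\calT_{4,4}$ is the $m=1$ instance of Lemma \ref{lem:BeforeTwist}, and either justification closes this small gap. Both proofs ultimately rest on the same two ingredients, the Hecke relation at $q$ and Lemma \ref{lem:Integrality}; the paper's version keeps the computation self-contained within the symmetrized sums it has already set up, while yours buys brevity and makes the mechanism of the lemma more transparent.
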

\begin{proof}
    By the equation \eqref{eq:BeforeTwist-1}, it is sufficient to show
    \begin{align}
        \sum_{k\in\bbZ/m\bbZ}\lkakko*{\frac{4k+m}{4m}}^{-}\in\begin{cases}
            2\bbZ & (\calL_f: \text{rectangular}),\\
            \bbZ & (\calL_f: \text{non-rectangular}).
        \end{cases}\label{eq:BeforeTwist2-1}
    \end{align} 
    Let $q$ be a prime dividing $m$ such that $a_q$ is even. Recall the identification
    \begin{align}
        (\bbZ/m\bbZ)^\times=\set*{k'\frac{m}{q}+k}{k\in (\bbZ/(m/q)\bbZ)^\times, \ k'\in(\bbZ/q\bbZ)^\times}.
    \end{align}
    Then we see that
    \begin{align}
        \sum_{k\in(\bbZ/m\bbZ)^\times}\lkakko*{\frac{4k+m}{4m}}^{-}
        =\sum_{k'\in(\bbZ/q\bbZ)^\times}\sum_{k\in(\bbZ/(m/q)\bbZ)^\times}\lkakko*{\frac{4(k'\frac{m}{q}+k)+m}{4m}}^{-}.\label{eq:BeforeTwist2-2}
    \end{align}
    We temporarily fix $k\in (\bbZ/(m/q)\bbZ)^\times$ and calculate as follows:
    \begin{align}
        a_{q}\lkakko*{\frac{(4k+m)q}{4m}}^{-}
        &=\lkakko*{\frac{(4k+m)q^2}{4m}}^{-}+\sum_{k'\in\bbZ/q\bbZ}\lkakko*{\frac{k'+\frac{(4k+m)q}{4m}}{q}}^{-}\\
        &=\lkakko*{\frac{4kq^2+m}{4m}}^{-}+\lkakko*{\frac{4k+m}{4m}}^{-}+\sum_{k'\in(\bbZ/q\bbZ)^\times}\lkakko*{\frac{4(k'\frac{m}{q}+k)+m}{4m}}^{-}.\label{eq:BeforeTwist2-3}
    \end{align}
    Note that $[q^2/4]^{\pm}=[1/4]^{\pm}$ since $q^2\equiv 1\bmod 4$.
    Taking the sum $\sum_{k\in(\bbZ/(m/q)\bbZ)^\times}$ in the equation \eqref{eq:BeforeTwist2-3} yields
    \begin{align}
        %\begin{multlined}[t][12cm]
            &\sum_{k'\in(\bbZ/q\bbZ)^\times}\sum_{k\in(\bbZ/(m/q)\bbZ)^\times}\lkakko*{\frac{4(k'\frac{m}{q}+k)+m}{4m}}^{-}\\
            &=\sum_{k\in(\bbZ/(m/q)\bbZ)^\times}\mkakko*{a_{q}\lkakko*{\frac{(4k+m)q}{4m}}^{-}-\lkakko*{\frac{4k+m}{4m}}^{-}}-\sum_{k\in(\bbZ/(m/q)\bbZ)^\times}\lkakko*{\frac{4kq^2+m}{4m}}^{-}\\
            &=\sum_{k\in(\bbZ/(m/q)\bbZ)^\times}\mkakko*{a_{q}\lkakko*{\frac{(4k+m)q}{4m}}^{-}-2\lkakko*{\frac{4k+m}{4m}}^{-}},\label{eq:BeforeTwist2-4}
        %\end{multlined}
    \end{align}
    where the last equality is obtained by using the transformation $k\mapsto q^{-2}k$ on the latter sum. Therefore, we obtain
    \begin{align}
        \begin{multlined}[t][12cm]
            \sum_{k\in(\bbZ/m\bbZ)^\times}\lkakko*{\frac{4k+m}{4m}}^{-}\\
            =a_{q}\sum_{k\in(\bbZ/(m/q)\bbZ)^\times}\lkakko*{\frac{(4k+m)q}{4m}}^{-}-2\sum_{k\in(\bbZ/(m/q)\bbZ)^\times}\lkakko*{\frac{4k+m}{4m}}^{-}.
        \end{multlined}
    \end{align}
    By the assumption, $a_{q}$ is an even integer. Therefore, the lemma holds from Lemma \ref{lem:Integrality}.
\end{proof}

By the above arguments, the 2-adic valuations of $a_q-(q+1)$ and $a_q-2\chi_4(q)$ for any prime $q$ dividing $m$ have the following lower bounds.

\begin{prop}\label{prop:BeforeTwist}
    We put $\frakw_m$ as $0$ if $\frakv_m=0$ and 1 otherwise.
    Then, for any prime $q$ dividing $m$, the following holds.
    \begin{enumerate}
        \item If $\calL_{f}$ is rectangular, then
        \begin{align}
            \hspace{-10pt}
            v_2(a_{q}-(q+1))\geq \frakw_m+1-v_2(\calT_{1}),\quad
            v_2(a_{q}-2\chi_{4}(q))\geq \frakw_m+1-v_2(\calT_{4})
        \end{align}
        \item If $\calL_{f}$ is non-rectangular, then
        \begin{align}
            v_2(a_{q}-(q+1))\geq \frakw_m-v_2(\calT_{1}),\quad
            v_2(a_{q}-2\chi_{4}(q))\geq \frakw_m-v_2(\calT_{4})
        \end{align}
    \end{enumerate}
\end{prop}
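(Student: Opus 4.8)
The plan is to reduce every bound to a single prime. The right-hand sides depend on $m$ only through $\frakw_m$ and through the intrinsic quantities $\calT_1,\calT_4$ of $f$; moreover $\frakw_m=1$ forces every $a_{q_i}$, and in particular $a_q$, to be even, so one always has $\frakw_m\le\frakw_q$, where $\frakw_q=0$ if $a_q$ is odd and $\frakw_q=1$ if $a_q$ is even. Hence it suffices to prove all four inequalities with $\frakw_m$ replaced by $\frakw_q$, that is, I may take the level to be $q$ (resp.\ $4q$) and invoke Proposition \ref{prop:TDD}, Lemma \ref{lem:BeforeTwist} and Lemma \ref{lem:BeforeTwist2} at that level. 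By Proposition \ref{prop:TDD} one has $\calT_{1,q}=\calT_1(a_q-2)$ and $\calT_{4,4q}=\calT_4(a_q-2\chi_4(q))$, which convert divisibility statements about the $\calT_{D,M}$ into 2-adic bounds on the two target quantities.

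First I would settle the case $\frakw_q=0$. Lemma \ref{lem:BeforeTwist} gives $\calT_{1,q}-(q-1)\calT_1\in 2\bbZ$ in the rectangular case and $\in\bbZ$ otherwise; since $\calT_{1,q}-(q-1)\calT_1=\calT_1(a_q-(q+1))$, this reads $v_2(a_q-(q+1))\ge 1-v_2(\calT_1)$ (rectangular) resp.\ $\ge -v_2(\calT_1)$ (non-rectangular). The same lemma gives $\calT_{4,4q}\in 2\bbZ$ resp.\ $\in\bbZ$, whence $v_2(a_q-2\chi_4(q))\ge 1-v_2(\calT_4)$ resp.\ $\ge -v_2(\calT_4)$. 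These are exactly the asserted bounds whenever $a_q$ is odd.

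It then remains to gain one further power of $2$ when $a_q$ is even. For the twisted quantity this is immediate: Lemma \ref{lem:BeforeTwist2} at level $4q$ yields $\calT_{4,4q}\in 4\bbZ$ (rectangular) resp.\ $\in 2\bbZ$ (non-rectangular), upgrading the bound to $v_2(a_q-2\chi_4(q))\ge 2-v_2(\calT_4)$ resp.\ $\ge 1-v_2(\calT_4)$. For the untwisted quantity I would aim at the parallel divisibility $\calT_1(a_q-(q+1))\in 4\bbZ$ (rectangular) resp.\ $\in 2\bbZ$ (non-rectangular), proved by mimicking Lemma \ref{lem:BeforeTwist2}: expand $\calT_{1,q}$ through the Hecke relation at $q$, use Lemma \ref{lem:Integrality} to assemble the terms not carrying the factor $a_q$ into an even integral combination of modular symbols, and then exploit the evenness of $a_q$. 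I expect this transfer from $\calT_4$ to $\calT_1$ to be the main obstacle: the proof of Lemma \ref{lem:BeforeTwist2} relies on the denominator $4$ and the character $\chi_4$ to produce the clean identity of the shape $S=a_qA-2B$, whereas for $\calT_1$ the relevant correction is the shift by $q-1=\#(\bbZ/q\bbZ)^{\times}$ rather than by $2$, so the extra divisibility becomes entangled with $q\bmod 4$ and with the parity of $q+1$. I would therefore scrutinize this final step most carefully, since it is precisely where the rectangular/non-rectangular dichotomy meets the residue of $q$ modulo $4$, and verify whether the clean gain survives in every residue class.
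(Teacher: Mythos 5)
Your reduction to $m=q$ via $\frakw_m\le\frakw_q$, and your proofs of three of the four bounds, coincide with the paper's own argument, which is precisely: apply Proposition \ref{prop:TDD} at levels $q$ and $4q$ to get $\calT_{1,q}-(q-1)\calT_1=(a_q-(q+1))\calT_1$ and $\calT_{4,4q}=(a_q-2\chi_4(q))\calT_4$, then invoke Lemma \ref{lem:BeforeTwist} (yielding all four inequalities with $\frakw_m$ replaced by $0$) and Lemma \ref{lem:BeforeTwist2} (yielding the extra power of $2$ for $a_q-2\chi_4(q)$ when $a_q$ is even).

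The step you singled out as the main obstacle is a genuine gap --- in the paper as much as in your sketch. The paper's proof asserts that everything follows from Lemma \ref{lem:BeforeTwist} and Lemma \ref{lem:BeforeTwist2}, but Lemma \ref{lem:BeforeTwist2} concerns only $\calT_{4,4m}$ and says nothing about $(a_q-(q+1))\calT_1$; no argument for the untwisted gain is given anywhere, and in fact your suspicion about residue classes is correct: the untwisted inequality with the $\frakw_m$ term is false. If $q\equiv 1\bmod 4$ and $v_2(a_q-2)=1$ (i.e.\ $a_q\equiv 0\bmod 4$), then $a_q-(q+1)\equiv 2\bmod 4$, so $v_2(a_q-(q+1))=1$ exactly, and taking $m=q$ (so $\frakw_m=1$) the claimed rectangular bound would force $v_2(\calT_1)\ge 1$. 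The paper's own example of \S\ref{sec:Example} contradicts this: for $f_1$ of level $34$ (Cremona label \texttt{34a1}) one has $\calL_{f_1}$ rectangular, $v_2(\calT_1)=0$, and $q=5\in\calS_{1}^{+}$ with $a_5=0$, so that $v_2(a_5-6)=1<2=\frakw_m+1-v_2(\calT_1)$; the same mechanism kills the non-rectangular version (the Cai--Li--Zhai setting $v_2(\calT_1)=-1$, $q_i\in\calS_{1}^{+}$ cited in the introduction would otherwise be empty). The correct statement keeps $\frakw_m$ only in the two inequalities for $a_q-2\chi_4(q)$ and drops it from those for $a_q-(q+1)$, and this weaker form is all that Remark \ref{rem:Assumption} actually uses: the non-emptiness conditions retained in \textsc{Table} \ref{tab:Assumptions}, such as $v_2(\calT_1)<1$, $q_i\in\calS_{1}^{+}$ for \eqref{eq:MainThm1-2}, are compatible only with the $\frakw_m$-free untwisted bound. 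So the final verification you proposed would have uncovered not a missing argument to supply but a misstatement to correct: the clean gain genuinely does not survive for $q\equiv 1\bmod 4$ with $a_q\equiv 0\bmod 4$.
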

\begin{proof}
    By Proposition \ref{prop:TDD}, we have
    \begin{align}
        &\calT_{1, q}-\#(\bbZ/q\bbZ)^\times\calT_{1}
        =(a_{q}-(q+1))\calT_{1}, \quad
        \calT_{4, 4q}=(a_{q}-2\chi_{4}(q))\calT_{4}.
    \end{align}
    The claim follows from Lemma \ref{lem:BeforeTwist} and Lemma \ref{lem:BeforeTwist2}.
\end{proof}

\section{The 2-adic valuations of the algebraic parts}\label{sec:TwoAdicVal}

Let $M=4^nm$ be a positive integer with $(M, N)=1$, where $n\in\{0, 1\}$ and $m$ is a square-free odd integer. We define $r(m)$ as the number of the prime divisors of $m$. We set
\begin{align}
    \frakv_m=\min_{q\mid m: \text{prime}}\mkakko*{v_{2}(a_{q}-2)}
\end{align}
and
\begin{align}
    \frakw_m=\begin{cases}
        0 & (\frakv_m=0),\\
        1 & (\frakv_m\geq 1).
    \end{cases}
\end{align}
Moreover, for an integer $i$, we define
\begin{align}
    &\calS_{i}^{\pm}=\set*{q: \text{odd prime}}{q\nmid N, \sgn(\chi_{q})=\pm 1, v_2(a_{q}-2)=i}
\end{align}
and set $\calS_{i}=\calS_{i}^{+}\cup\calS_{i}^{-}$. Note that if $\calS_{i}^{\pm}$ is non-empty, by Chebotarev's density theorem, $\calS_{i}^{\pm}$ has positive density.

In the following, we state Theorem \ref{thm:MainTheorem} in four cases, according to whether $\calL_f$ is rectangular or not, and whether $n=0$ or not. Since proofs are the same in all cases, we only prove Theorem \ref{thm:MainTheorem} for the case where $\calL_f$ is rectangular and $n=1$ in Theorem \ref{thm:MainThm3}. In the following, $\delta_{i, j}$ denotes the Kronecker delta.

\begin{thm}[Rectangular, $n=0$]\label{thm:MainThm1}
    We assume that $\calL_{f}$ is rectangular and $n=0$ and write $m=q_1\cdots q_{r(m)}$.
    \begin{enumerate}
        \item If $q_i\in \calS_{0}\cup\calS_{1}\cup\calS_{2}$ for all $i$, then
        \begin{align}
        \label{eq:MainThm1-1}\\
           \hspace{-60pt} v_2\skakko*{\frac{L(f, \chi_{m}, 1)}{\Omega_{f}^{\sgn(\chi_{m})}}}
            \geq \frakw_m\cdot r(m)+\min\mkakko*{\delta_{\frakv_m, 0}, v_2\skakko*{\frac{L(f, 1)}{\Omega_{f}^{+}}}}.
        \end{align}
        \item If $q_i\in \calS_{0}^{+}\cup\calS_{1}^{+}\cup\calS_{2}^{+}$ for all $i$, then
        \begin{align}
        \label{eq:MainThm1-2}\\
        \begin{multlined}[t][12cm]
            v_2\skakko*{\frac{L(f, \chi_{m}, 1)}{\Omega_{f}^{\sgn(\chi_{m})}}}
            \geq \frakw_m\cdot r(m)+\min\mkakko*{1+\delta_{\frakv_m, 0}, v_2\skakko*{\frac{L(f, 1)}{\Omega_{f}^{+}}}+\delta_{\frakv_m, 2}}.\end{multlined}
        \end{align}
        In particular, if $v_2(L(f, 1)/\Omega_{f}^{+})=1$ and $q_i\in \calS_{0}^{+}$ for all $i$, or $v_2(L(f, 1)/\Omega_{f}^{+})<1$ and $q_{i}\in \calS_{1}^{+}$ for all $i$, then the equality in the inequality \eqref{eq:MainThm1-2} holds.
    \end{enumerate}
\end{thm}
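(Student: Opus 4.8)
The plan is to run Zhao's induction on $r(m)$ after reducing the theorem to an estimate for the rational number $\calT_{m,m}$. By Theorem \ref{thm:ShimuraManin} together with $v_2(\tau(\chi_m))=0$ (since $m$ is odd, $\tau(\chi_m)^2=\chi_m(-1)m$ is a $2$-adic unit), we have $v_2(L(f,\chi_m,1)/\Omega_f^{\sgn(\chi_m)})=v_2(\calT_{m,m})$, so it suffices to bound $v_2(\calT_{m,m})$ from below. Recalling $\calT_1=L(f,1)/\Omega_f^+$, I would isolate $\calT_{m,m}$ in the decomposition \eqref{eq:Intro-1}, writing $\calT_{m,m}=\sum_{d\mid m}\calT_{d,m}-\calT_{1,m}-\sum_{d\mid m,\,d\neq 1,m}\calT_{d,m}$, and bound $v_2(\calT_{m,m})$ below by the minimum of the $2$-adic valuations of the three groups. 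Both (i) and (ii) are handled by a single simultaneous induction on $r(m)$, the case $r(m)=1$ being covered by the first two groups alone.

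For the full sum, Proposition \ref{prop:SumBound1} gives $v_2(\sum_{d\mid m}\calT_{d,m})\geq r(m)+\min\{0,v_2(\calT_1)\}$ in case (i), and in case (ii) the sharper Proposition \ref{prop:SumBound2}(i) applies (its hypothesis holds since every $q_i\equiv 1\bmod 4$), gaining one extra power of $2$. For $\calT_{1,m}=\calT_1\prod_i(a_{q_i}-2)$, Proposition \ref{prop:TDD} yields $v_2(\calT_{1,m})=v_2(\calT_1)+\sum_i v_2(a_{q_i}-2)$, and I would use $\sum_i v_2(a_{q_i}-2)\geq \frakw_m\, r(m)$ in case (i) and the sharper $\geq\frakw_m\, r(m)+\delta_{\frakv_m,2}$ in case (ii). For a middle divisor $d=\prod_{i\in I}q_i$, Proposition \ref{prop:TDD} gives $\calT_{d,m}=\calT_d\prod_{q\mid m/d}(a_q-2\chi_d(q))$, so that the induction hypothesis controls $v_2(\calT_d)$; the one arithmetic input I need is $v_2(a_q-2\chi_d(q))\geq\frakw_m$ for $q\mid m/d$, which is immediate from $q\in\calS_0\cup\calS_1\cup\calS_2$, noting that $a_q-2$ and $a_q+2$ share the same valuation for $q\in\calS_0\cup\calS_1$ while for $q\in\calS_2$ the value $v_2(a_q+2)$ is strictly larger. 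Combining these with $\frakw_d\geq\frakw_m$ (as $\frakv_d\geq\frakv_m$) and the fact that $|I|\geq 1$ for a middle term, each of the three groups is seen to meet the asserted bound.

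For the equality assertion in (ii), I would observe that the hypotheses ($v_2(\calT_1)=1$ with all $q_i\in\calS_0^+$, or $v_2(\calT_1)<1$ with all $q_i\in\calS_1^+$) are inherited by every divisor $d\mid m$, so the inductive equality determines $v_2(\calT_d)$ exactly for all $d$. Since every $q_i\equiv 1\bmod 4$ forces $\chi_d(q)=1$, the factor $\prod_{q\mid m/d}(a_q-2)$ is a $2$-adic unit times $2^{\,r(m)-|I|}$, and one checks that every term $\calT_{d,m}$ with $d\neq m$ has the same exact valuation $L_0$, where $L_0$ is the target lower bound. Hence $\calT_{1,m}+\sum_{d\mid m,\,d\neq 1,m}\calT_{d,m}=\sum_{d\mid m,\,d\neq m}\calT_{d,m}=2^{L_0}\sum_{d\neq m}u_d$ with each $u_d$ a $2$-adic unit; because the number of divisors $d\neq m$ is $2^{r(m)}-1$, which is odd, the sum $\sum_{d\neq m}u_d$ is again a unit, so this grouping has valuation exactly $L_0$. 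As the full sum has valuation $\geq L_0+1$ by Proposition \ref{prop:SumBound2}(i), subtraction yields $v_2(\calT_{m,m})=L_0$, which is the desired equality.

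The main obstacle I anticipate is the inductive bookkeeping: verifying, in every subcase organized by $\frakv_m\in\{0,1,2\}$ and by the sign of $v_2(\calT_1)$, that the minimum of the three group-valuations actually reaches $\frakw_m\, r(m)+\min\{\ldots\}$. The delicate point is a divisor $d$ whose $\frakv_d$ is strictly larger than $\frakv_m$ (for instance $\frakv_m=0<\frakv_d$ with $v_2(\calT_1)\geq 1$), where the naive estimate can dip below the target; the resolution is the uniform exploitation of $|I|\geq 1$ for the middle terms and of $r(m)\geq 1$ for the full sum. For the sharpness statement, the nonroutine idea is the parity count: $2^{r(m)}-1$ is odd, so a sum of that many $2$-adic units remains a unit, which is exactly what prevents the expected valuation from jumping.
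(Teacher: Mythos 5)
Your proposal is correct and takes essentially the same route as the paper: the same reduction to $v_2(\calT_{m,m})$ via the Gauss sum, the same decomposition \eqref{eq:Intro-1} estimated by Propositions \ref{prop:SumBound1}, \ref{prop:SumBound2} and \ref{prop:TDD}, the same induction on $r(m)$ with the $|I|\geq 1$ resolution of the $\frakv_d>\frakv_m$ subcase, and the same parity mechanism for sharpness. The only cosmetic difference is that in the equality step you merge $\calT_{1,m}$ with the middle terms and use that $2^{r(m)}-1$ units sum to a unit, whereas the paper keeps $\calT_{1,m}$ separate and notes that the $2^{r(m)}-2$ middle terms of equal exact valuation sum to something of strictly larger valuation---an equivalent argument.
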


\begin{thm}[Non-rectangular, $n=0$]\label{thm:MainThm2}
    We assume that $\calL_{f}$ is non-rectangular and $n=0$ and write $m=q_1\cdots q_{r(m)}$.
    \begin{enumerate}
        \item If $q_i\in\calS_{0}\cup\calS_{1}\cup\calS_{2}$ for all $i$, then
        \begin{align}
        \label{eq:MainThm2-1}\\
           \hspace{-40pt}  v_2\skakko*{\frac{L(f, \chi_{M}, 1)}{\Omega_{f}^{\sgn(\chi_{M})}}}
            \geq \frakw_m\cdot r(m)+\min\mkakko*{\delta_{\frakv_m, 0}, v_2\skakko*{\frac{L(f, 1)}{\Omega_{f}^{+}}}}.
        \end{align}
        In particular, if $v_2(L(f, 1)/\Omega_f)=0$ and $q_i\in\calS_{0}$ for all $i$, then the equality in the inequality \eqref{eq:MainThm2-1} holds.
        \item If $q_i\in\calS_{0}^{+}\cup\calS_{1}^{+}\cup\calS_{2}^{+}$ for all $i$, then
        \begin{align}
             \begin{multlined}[t][12cm]
             \label{eq:MainThm2-2}\\ %\vspace{-2pt}\\ \
        \hspace{-40pt} v_2\skakko*{\frac{L(f, \chi_{M}, 1)}{\Omega_{f}^{\sgn(\chi_{M})}}}
            \geq \frakw_m\cdot r(m)+\min\mkakko*{\delta_{\frakv_m, 0}, v_2\skakko*{\frac{L(f, 1)}{\Omega_{f}^{+}}}+\delta_{\frakv_m, 2}}.
         \end{multlined}
        \end{align}
        In particular, if $v_2(L(f, 1)/\Omega_{f}^{+})=0$ and $q_i\in \calS_{0}^{+}$ for all $i$, or $v_2(L(f, 1)/\Omega_{f}^{+})<0$ and $q_i\in \calS_{1}^{+}$, then the equality in the inequality \eqref{eq:MainThm2-2} holds.
    \end{enumerate}
\end{thm}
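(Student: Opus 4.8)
The plan is to prove both parts by induction on the number $r=r(m)$ of prime divisors of $m$, working throughout with the single quantity $\calT_{m,m}$. By Theorem \ref{thm:ShimuraManin} we have $\calT_{m,m}=\tau(\chi_m)\,L(f,\chi_m,1)/\Omega_f^{\sgn(\chi_m)}$, and since $m$ is odd the quadratic Gauss sum satisfies $\tau(\chi_m)^2=\pm m$, so $v_2(\tau(\chi_m))=0$ and it suffices to bound $v_2(\calT_{m,m})$. First I would isolate $\calT_{m,m}$ in the decomposition $\sum_{d\mid m}\calT_{d,m}=\sum_{d\mid m,\,d\neq m}\calT_{d,m}+\calT_{m,m}$ and estimate the full sum and each proper-divisor term separately.

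For the full sum $\sum_{d\mid m}\calT_{d,m}$ I would apply Proposition \ref{prop:SumBound1} in part (i) and Proposition \ref{prop:SumBound2}(i) (non-rectangular case) in part (ii), where the hypothesis $\sgn(\chi_{q_i})=+1$ for all $i$ is exactly what that proposition needs; these give valuations at least $r+\min\{0,v_2(\calT_1)\}$ and $r+1+\min\{-1,v_2(\calT_1)\}$. For a proper divisor $d\neq m$, Proposition \ref{prop:TDD} gives $\calT_{d,m}=\calT_{d,d}\prod_{q\mid m/d}(a_q-2\chi_d(q))$; I would bound $v_2(\calT_{d,d})$ by the induction hypothesis applied to $d$, whose prime factors inherit the hypotheses and satisfy $\frakv_d\geq\frakv_m$, and bound each factor by $v_2(a_q-2\chi_d(q))\geq\frakw_m$. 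This last inequality holds because $\frakw_m=1$ forces $v_2(a_q-2)\geq 1$, hence $a_q$ even and $a_q\pm 2$ even. A short case distinction on whether $\frakv_m=0$ or $\frakv_m\geq 1$ then shows that every proper-divisor term, as well as the full sum, has valuation at least the target $V:=\frakw_m\,r+\min\{\delta_{\frakv_m,0},v_2(\calT_1)\}$ (and its part-(ii) analogue). Since $\calT_{m,m}$ is the difference of the full sum and these terms, the same bound holds for $\calT_{m,m}$. Here the case $d=1$, where $\calT_{1,m}=\calT_1\prod_i(a_{q_i}-2)$, is treated directly, and $r=1$ serves as the base case.

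For the equality assertions I would show that under the stated uniform hypotheses the bound $V$ is attained with no cancellation. The point is that each of the $2^r-1$ proper-divisor terms then has valuation \emph{exactly} $V$: the induction hypothesis supplies the exact value of $v_2(\calT_{d,d})$ through its own equality clause, and because all primes lie in a single class ($\calS_0$ in part (i); $\calS_0^+$ or $\calS_1^+$ in part (ii)) every factor $a_q-2\chi_d(q)$ has a predetermined valuation, using that $v_2(a_q+2)=v_2(a_q-2)$ whenever $v_2(a_q-2)<2$. Hence $\sum_{d\neq m}\calT_{d,m}$ is $2^V$ times a sum of $2^r-1$ two-adic units, and since $2^r-1$ is odd this sum is itself a unit, so the subsum has valuation exactly $V$; meanwhile the full sum has valuation strictly larger than $V$ by Proposition \ref{prop:SumBound1} or \ref{prop:SumBound2}. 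Subtracting gives $v_2(\calT_{m,m})=V$, as claimed.

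I expect the main obstacle to be the equality statements rather than the inequality, the latter being essentially valuation bookkeeping once the estimates above are in place. The decisive structural input is that a squarefree $m$ with $r$ prime factors has exactly $2^r-1$ proper divisors, an odd number, so a sum of that many terms of equal valuation cannot drop; the delicate points are keeping the induction's equality clause and the sign-twisted factors $a_q-2\chi_d(q)$ aligned across all divisors, and, in part (ii), correctly attributing the extra $\delta_{\frakv_m,2}$ to the valuation jump $v_2(a_q+2)\geq 3$ that occurs exactly when $v_2(a_q-2)=2$.
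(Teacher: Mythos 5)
Your proposal is correct and follows essentially the same route as the paper: induction on $r(m)$ applied to the decomposition of $\sum_{d\mid m}\calT_{d,m}$, with Proposition \ref{prop:SumBound1} (part (i)) and Proposition \ref{prop:SumBound2} (part (ii)) bounding the full sum, Proposition \ref{prop:TDD} together with the induction hypothesis handling the terms with $d\neq m$, and a mod-$2$ parity count of divisors yielding the equality cases. The only (immaterial) variation is that you merge $d=1$ with the intermediate divisors and argue that an odd number, $2^{r}-1$, of terms of exact valuation $V$ sums to valuation exactly $V$, whereas the paper isolates $\calT_{1,m}$ (resp.\ $\calT_{4,4m}$) as the unique term of minimal valuation and uses that the even number $2^{r}-2$ of intermediate terms makes their sum gain an extra factor of $2$.
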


\begin{thm}[Rectangular, $n=1$]\label{thm:MainThm3}
    We assume that $\calL_{f}$ is rectangular and $n=1$ and write $m=q_1\cdots q_{r(m)}$.
    \begin{enumerate}
        \item If $q_i\in\calS_{0}\cup\calS_{1}\cup\calS_{2}$ for all $i$, then 
        \begin{align}
            \begin{multlined}[t][12cm]
                v_2\skakko*{\frac{L(f, \chi_{M}, 1)}{\Omega_{f}^{\sgn(\chi_{M})}}}\geq \frakw_m\cdot r(m)\\
                +\min\mkakko*{-1+\delta_{\frakv_m, 0}, -1+\delta_{\frakv_m, 0}+v_2\skakko*{\frac{L(f, 1)}{\Omega_f^+}}, v_2\skakko*{\frac{L(f, \chi_4, 1)}{\Omega_f^-}}+\delta_{\frakv_m, 2}}.
            \end{multlined}\label{eq:MainThm3-1}
        \end{align}
        \item If $q_i\in\calS_{0}^{+}\cup\calS_{1}^{+}\cup\calS_{2}^{+}$ for all $i$, then
      \begin{align}
         \begin{multlined}[t][12cm]
    \label{eq:MainThm3-2}\vspace{-2pt}\\ 
   \hspace{-30pt} v_2\skakko*{\frac{L(f, \chi_{M}, 1)}{\Omega_{f}^{\sgn(\chi_{M})}}}\geq \frakw_m\cdot r(m)+\min\mkakko*{\delta_{\frakv_m, 0}, v_2\skakko*{\frac{L(f, \chi_{4}, 1)}{\Omega_{f}^{-}}}+\delta_{\frakv_m, 2}}.
 \end{multlined}
        \end{align}
  
        In particular, if $v_2(L(f, \chi_{4}, 1)/\Omega_{f}^{-})=0$ and $q_i\in\calS_{0}^{+}$ for all $i$, then the equality in the inequality \eqref{eq:MainThm3-2} holds.
    \end{enumerate}
\end{thm}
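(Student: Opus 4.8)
The plan is to prove parts (i) and (ii) together by induction on $r=r(m)$, working with $\calT_{4m,4m}$ rather than with the $L$-value directly. Write $t_1=v_2(L(f,1)/\Omega_f^+)=v_2(\calT_1)$ and $t_4=v_2(L(f,\chi_4,1)/\Omega_f^-)$, so that $v_2(\calT_4)=1+t_4$. By Theorem \ref{thm:ShimuraManin} together with $v_2(\tau(\chi_M))=1$ (the quadratic Gauss sum of conductor $M=4m$ satisfies $\tau(\chi_M)^2=\pm 4m$ with $m$ odd), the assertions \eqref{eq:MainThm3-1} and \eqref{eq:MainThm3-2} are equivalent to
\begin{align}
v_2(\calT_{4m,4m})\geq \frakw_m r+1+C_m,
\end{align}
where $C_m$ denotes the $\min$-expression appearing in the relevant part. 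The base case $m=1$ reads $v_2(\calT_4)=1+t_4\geq 1+C_1$, which holds because $1+t_4$ is itself one of the terms inside $C_1$.

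For the inductive step I start from the decomposition
\begin{align}
\sum_{d\mid m}\calT_{4d,4m}=\calT_{4,4m}+\sum_{\substack{d\mid m\\ d\neq 1,m}}\calT_{4d,4m}+\calT_{4m,4m}
\end{align}
and solve for $\calT_{4m,4m}$, so that $v_2(\calT_{4m,4m})$ is at least the minimum of the valuations of the three pieces on the other side. The full sum is bounded below by $r+1$ via Proposition \ref{prop:SumBound2}(ii) in part (ii) (where all $q_i\equiv 1\bmod 4$) and by $r+\min\{0,t_1\}$ via Proposition \ref{prop:SumBound1} in part (i); a short case split on $\frakv_m\in\{0,1,2\}$ shows either bound already dominates $\frakw_m r+1+C_m$. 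For the first term, Proposition \ref{prop:TDD} gives $\calT_{4,4m}=\calT_4\prod_i(a_{q_i}-2\chi_4(q_i))$, and since $v_2(a_q-2\chi_4(q))\geq v_2(a_q-2)\geq\frakw_m$ for every $q\mid m$ (with an extra unit of $2$-divisibility when $v_2(a_q-2)=2$ and $\chi_4(q)=-1$, as then $a_q+2\equiv 0\bmod 8$), combining with $v_2(\calT_4)=1+t_4$ again meets the target.

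The substantive part is the middle sum. For each $d$ with $1\neq d\neq m$, let $s$ be its number of prime factors; Proposition \ref{prop:TDD} gives $\calT_{4d,4m}=\calT_{4d}\prod_{q\mid m/d}(a_q-2\chi_{4d}(q))$. The induction hypothesis applied to $4d$ yields $v_2(\calT_{4d})\geq\frakw_d s+1+C_d$, while $v_2(a_q-2\chi_{4d}(q))\geq v_2(a_q-2)\geq\frakw_m$ controls each factor of the product, so that
\begin{align}
v_2(\calT_{4d,4m})\geq 1+\frakw_d s+C_d+\frakw_m(r-s).
\end{align}
Since $d\mid m$ forces $\frakv_d\geq\frakv_m$ and hence $\frakw_d\geq\frakw_m$, one has $\frakw_d s+\frakw_m(r-s)\geq\frakw_m r$, reducing matters to comparing $C_d$ with $C_m$. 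This is the main obstacle: $C_d$ can be strictly smaller than $C_m$, and precisely when $\frakv_d>\frakv_m$, so the naive comparison fails. The resolution is that $\frakv_d>\frakv_m$ forces the prime realizing the global minimum $\frakv_m$ to lie among $q\mid m/d$ and, in the borderline case, forces $\frakw_d>\frakw_m$; the resulting surplus (at least $s\geq 1$) compensates the deficit $C_m-C_d$, which one checks never exceeds it by running through the finitely many configurations $(\frakv_m,\frakv_d)\in\{0,1,2\}^2$. Once all three pieces are shown to be at least $\frakw_m r+1+C_m$, taking the minimum proves the inequalities. For the asserted equality in part (ii), when $t_4=0$ and all $q_i\in\calS_0^+$ I would instead run the sharper induction that $v_2(\calT_{4d,4d})=1$ exactly for every $d\mid m$: each factor $a_q\mp 2$ is then odd, so $\calT_{4d,4m}$ has valuation exactly $1$, and since there are $2^r-1$ divisors $d\neq m$, an odd number, the sum $\sum_{d\neq m}\calT_{4d,4m}$ has valuation exactly $1$ while $\sum_{d\mid m}\calT_{4d,4m}$ has valuation $\geq r+1\geq 2$, forcing $v_2(\calT_{4m,4m})=1$, i.e. equality. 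Here Proposition \ref{prop:BeforeTwist} is not needed for the estimates; it serves, via Remark \ref{rem:Assumption}, to guarantee that the hypothesis $q_i\in\calS_0\cup\calS_1\cup\calS_2$ is non-vacuous.
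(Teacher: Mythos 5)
Your proposal is correct and takes essentially the same route as the paper's proof: the same induction on $r(m)$ via the decomposition $\sum_{d\mid m}\calT_{4d,4m}=\calT_{4,4m}+\sum_{d\neq 1,m}\calT_{4d,4m}+\calT_{4m}$, with Proposition \ref{prop:SumBound2} (resp.\ Proposition \ref{prop:SumBound1}) bounding the full sum, Proposition \ref{prop:TDD} plus the induction hypothesis handling the remaining terms, and your finite case-check comparing $C_d$ with $C_m$ (valid since the hypothesis $q\in\calS_0\cup\calS_1\cup\calS_2$ guarantees $v_2(a_q-2\chi_{4d}(q))\geq v_2(a_q-2)$, exactly as the paper notes for its inequality \eqref{eq:MainThm3-5.5}) being precisely the content of the paper's chains \eqref{eq:MainThm3-8}--\eqref{eq:MainThm3-9}. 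Your equality argument is a harmless repackaging of the paper's: you sum the $2^{r}-1$ terms with $d\neq m$, each of exact valuation $1$, and use that this count is odd, whereas the paper isolates $\calT_{4,4m}$ with exact valuation $1$ and uses the even count $2^{r}-2$ of intermediate divisors to push the middle sum to valuation at least $2$ --- the same $2$-adic parity trick.
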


\begin{thm}[Non-rectangular, $n=1$]\label{thm:MainThm4}
    Assume that $\calL_{f}$ is non-rectangular and $n=1$ and write $m=q_1\cdots q_{r(m)}$.
    \begin{enumerate}
        \item If $q_i\in \calS_{0}\cup\calS_{1}\cup\calS_{2}$ for all $i$, then
        \begin{align}
            \begin{multlined}[t][12cm]
            v_2\skakko*{\frac{L(f, \chi_{M}, 1)}{\Omega_{f}^{\sgn(\chi_{M})}}}\geq \frakw_m\cdot r(m)\\
            +\min\mkakko*{-1+\delta_{\frakv_m, 0}, -1+\delta_{\frakv_m, 0}+v_2\skakko*{\frac{L(f, 1)}{\Omega_{f}^{+}}}, v_2\skakko*{\frac{L(f, \chi_{4}, 1)}{\Omega_{f}^{-}}}+\delta_{\frakv_m, 2}}.
            \end{multlined}\label{eq:MainThm4-1}
        \end{align}
        In particular, if $v_2(L(f, \chi_{4}, 1)/\Omega_{f}^{-})=-1$ and $q_i\in \calS_{0}$ for all $i$, then the equality in the inequality \eqref{eq:MainThm4-1} holds.
        \item If $q_i\in \calS_{0}^{+}\cup\calS_{1}^{+}\cup\calS_{2}^{+}$ for all $i$, then
        \begin{align}
            v_2\skakko*{\frac{L(f, \chi_{M}, 1)}{\Omega_{f}^{\sgn(\chi_{M})}}}
            \geq \frakw_m\cdot r(m)+\min\mkakko*{-1+\delta_{\frakv_m, 0}, v_2\skakko*{\frac{L(f, \chi_{4}, 1)}{\Omega_{f}^{-}}}+\delta_{\frakv_m, 2}}.\label{eq:MainThm4-2}
        \end{align}
        In particular, if $v_2(L(f, \chi_{4}, 1)/\Omega_{f}^{-})=-1$ and $q_i\in \calS_{0}^{+}$ for all $i$, then the equality in the inequality \eqref{eq:MainThm4-2} holds.
    \end{enumerate}
\end{thm}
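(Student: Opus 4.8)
The plan is to prove Theorem~\ref{thm:MainThm4} by induction on $r=r(m)$, running the same argument as in the rectangular case of Theorem~\ref{thm:MainThm3} but substituting the non-rectangular versions of Lemma~\ref{lem:Integrality}, Proposition~\ref{prop:SumBound2} and Proposition~\ref{prop:BeforeTwist} throughout. The object to estimate is $\calT_{4m, 4m}=\calT_{4m}$. By Theorem~\ref{thm:ShimuraManin} we have $\calT_{4m}=\tau(\chi_{4m})L(f, \chi_{4m}, 1)/\Omega_f^{\sgn(\chi_{4m})}$, and since $m$ is odd the quadratic Gauss sum satisfies $v_2(\tau(\chi_{4m}))=1$; likewise $\calT_4=\tau(\chi_4)L(f,\chi_4,1)/\Omega_f^-$ has $v_2(\tau(\chi_4))=1$, whereas $\calT_1=L(f,1)/\Omega_f^+$ carries no Gauss-sum factor. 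Thus a lower bound for $v_2(\calT_{4m})$ yields \eqref{eq:MainThm4-1} and \eqref{eq:MainThm4-2} after subtracting $1$, and these unequal Gauss-sum shifts are exactly what produce the asymmetric constants $-1+\delta_{\frakv_m, 0}$, $-1+\delta_{\frakv_m, 0}+v_2(L(f,1)/\Omega_f^+)$ and $v_2(L(f,\chi_4,1)/\Omega_f^-)+\delta_{\frakv_m, 2}$ inside the minimum.

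The starting point is the decomposition
\begin{align}
    \calT_{4m, 4m}=\sum_{d\mid m}\calT_{4d, 4m}-\calT_{4, 4m}-\sum_{\substack{d\mid m\\ d\neq 1, m}}\calT_{4d, 4m},
\end{align}
so by the ultrametric inequality it suffices to bound $v_2$ of each of the three pieces below by the target. For the total sum I would apply Proposition~\ref{prop:SumBound1} in part (i) and the sharper Proposition~\ref{prop:SumBound2} (non-rectangular, $n=1$) in part (ii); together with $r\geq\frakw_m r$ these already dominate the target constant for $r\geq 1$. For the term $\calT_{4, 4m}$, Proposition~\ref{prop:TDD} gives $\calT_{4, 4m}=\calT_4\prod_{i=1}^{r}(a_{q_i}-2\chi_4(q_i))$; here the hypothesis $q_i\in\calS_0\cup\calS_1\cup\calS_2$ controls each factor, since for $q_i\equiv 1\bmod 4$ the factor is $a_{q_i}-2$, while for $q_i\equiv 3\bmod 4$ I rewrite $a_{q_i}+2=(a_{q_i}-2)+4$ and exploit $v_2(a_{q_i}-2)\leq 2$. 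Each factor then has valuation at least $\frakw_m$, with a uniform gain of $\delta_{\frakv_m, 2}$, giving $v_2(\calT_{4, 4m})\geq v_2(\calT_4)+\frakw_m r+\delta_{\frakv_m, 2}$, matching the third entry of the minimum.

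The middle sum is where the induction and the real difficulty lie. For a proper divisor $d=q_{i_1}\cdots q_{i_s}$ with $1\leq s\leq r-1$, Proposition~\ref{prop:TDD} gives $\calT_{4d, 4m}=\calT_{4d}\prod_{q\mid m/d}(a_q-2\chi_{4d}(q))$; the product contributes at least $\frakw_m(r-s)$, and $v_2(\calT_{4d})$ is bounded by the induction hypothesis applied to the modulus $4d$. The hard part will be that $\frakv_d=\min_{q\mid d}v_2(a_q-2)$ can be strictly larger than $\frakv_m$ — this occurs precisely when the prime realizing $\frakv_m$ is dropped from $d$ — so the inductive constant for $d$ involves $\delta_{\frakv_d, 0},\delta_{\frakv_d, 2}$ rather than $\delta_{\frakv_m, 0},\delta_{\frakv_m, 2}$. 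The key point is that this drop is always absorbed: when $\frakw_m=0$ but $\frakw_d=1$ the additional main term $\frakw_d s=s\geq 1$ compensates the loss $\delta_{\frakv_m, 0}-\delta_{\frakv_d, 0}$, and Proposition~\ref{prop:BeforeTwist} bounds the twist factors $a_q-2\chi_4(q)$ below by $\frakw_m-v_2(\calT_4)$, keeping the inductive constants consistent with $v_2(\calT_1)$ and $v_2(\calT_4)$. Carrying out this bookkeeping over the cases $\frakv_m\in\{0,1,2\}$ and the residues $q_i\bmod 4$ is the bulk of the argument; non-rectangularity enters only through the extra $-1$ in Lemma~\ref{lem:Integrality} and Proposition~\ref{prop:SumBound2}, which is the origin of the $-1+\delta_{\frakv_m, 0}$ in the bound. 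The base case $r=1$, where the middle sum is empty, follows at once from the decomposition together with Propositions~\ref{prop:SumBound1}, \ref{prop:SumBound2} and \ref{prop:TDD}.

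Finally, for the equality assertions I would show that under the stated hypotheses exactly one of the three pieces has strictly smallest valuation, so the ultrametric inequality becomes an equality. Concretely, when $v_2(L(f, \chi_4, 1)/\Omega_f^-)=-1$ and every $q_i\in\calS_0$ (resp. $\calS_0^+$), the term $\calT_{4, 4m}$ realizes the minimum; since every factor $a_{q_i}-2\chi_4(q_i)$ then has valuation $0$, its valuation equals $v_2(\calT_4)$ exactly, and one checks using Proposition~\ref{prop:SumBound1} (resp. Proposition~\ref{prop:SumBound2}) and the base case that the total sum and the middle sum have strictly larger valuation, forcing equality in \eqref{eq:MainThm4-1} (resp. \eqref{eq:MainThm4-2}).
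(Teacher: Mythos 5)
Your main-inequality argument is essentially the paper's own proof: the paper only writes out the rectangular, $n=1$ case (Theorem \ref{thm:MainThm3}) and states that the remaining cases, including Theorem \ref{thm:MainThm4}, follow by the identical induction. Your Gauss-sum accounting ($v_2(\tau(\chi_{4m}))=v_2(\tau(\chi_4))=1$, no such factor for $\calT_1$), the three-piece decomposition of $\calT_{4m}$, the use of Proposition \ref{prop:SumBound1} in part (i) and of the non-rectangular case of Proposition \ref{prop:SumBound2} in part (ii), the bound $v_2(a_q-2\chi_{4d}(q))\geq v_2(a_q-2)$ --- valid precisely because the hypothesis forces $v_2(a_q-2)\leq 2$, as you note via $a_q+2=(a_q-2)+4$ --- and the absorption of the possible jump $\frakv_d>\frakv_m$ in the inductive step all match the proof of Theorem \ref{thm:MainThm3}. (A quibble: Proposition \ref{prop:BeforeTwist} is not needed to control the twist factors in the induction; the hypothesis $q_i\in\calS_0\cup\calS_1\cup\calS_2$ already does that, and the paper never invokes it in the proof of the theorems.)

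The genuine gap is in your equality argument. Under the equality hypotheses ($v_2(\calT_4)=0$, all $q_i\in\calS_0$ resp.\ $\calS_0^{+}$), every middle term $\calT_{4d,4m}$ with $d\neq 1,m$ has 2-adic valuation \emph{exactly} $v_2(\calT_4)=v_2(\calT_{4,4m})$: the inductive equality gives $v_2(\calT_{4d})=v_2(\calT_4)$, and each factor $a_q-2\chi_{4d}(q)$ is odd. So no ultrametric estimate --- and neither Proposition \ref{prop:SumBound1} nor \ref{prop:SumBound2}, which control only the full divisor sum, nor the base case --- can yield the strict inequality $v_2\bigl(\sum_{d\mid m,\, d\neq 1,m}\calT_{4d,4m}\bigr)>v_2(\calT_{4,4m})$ that your last paragraph asserts; your claim that ``exactly one of the three pieces has strictly smallest valuation'' is not true termwise and does not follow from the tools you cite. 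The paper closes this with a parity argument you are missing: there are $2^{r(m)}-2$ such divisors $d$, an even number, and a sum of evenly many rationals of one and the same valuation $c$ (i.e., of $2^c$ times 2-adic units) has valuation at least $c+1$; this single extra factor of $2$ is exactly what makes $\calT_{4,4m}$ the strict minimum of the three pieces for $r(m)\geq 2$ (for $r(m)=1$ the middle sum is empty and your argument suffices). A second, smaller omission concerns the equality in part (i): you must also rule out that the $\calT_1$-entry of the minimum, and Proposition \ref{prop:SumBound1}'s bound $r(m)+\min\{0, v_2(\calT_1)\}$, undercut $v_2(\calT_4)$; this is where Proposition \ref{prop:BeforeTwist} is actually needed --- since $q_i\in\calS_0$ makes $a_{q_i}-(q_i+1)$ odd, the non-rectangular case of that proposition forces $v_2(\calT_1)\geq 0$, as recorded in Table \ref{tab:Assumptions}.
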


%\begin{rem}
%    In Theorem \ref{thm:MainThm1}-\ref{thm:MainThm4}, if there exists at least one prime $q_i$ such that $a_{q_i}$ is odd, then we can remove the condition that $q_i\in \calS_0\cup\calS_1\cup\calS_2$ for all $i$. Otherwise, we can also remove the condition. However, these lower bounds may not be sharp.
%\end{rem}

\begin{rem}\label{rem:Assumption}
    In Theorem \ref{thm:MainThm1}-(i), we can show that the equality holds when $v_2(\calT_{1})<1$ and $q_i\in\calS_{0}$ for all $i$. However, Proposition \ref{prop:BeforeTwist} shows that such a condition does not hold. This means that the equality
    \begin{align}
        v_2\skakko*{\frac{L(f, \chi_m, 1)}{\Omega_{f}^{\sgn(\chi_m)}}}=\frakw_m\cdot r(m)+\min\mkakko*{\delta_{\frakv_m, 0}, v_2\skakko*{\frac{L(f, 1)}{\Omega_{f}^{+}}}}
    \end{align}
    is not necessarily determined only by $v_2(\calT_{1})$ and $v_2(a_{q_i}-2)$.
    \textsc{Table} \ref{tab:Assumptions} shows, for each theorem, the conditions under which the equality holds and the conditions under which is non-empty, taking Proposition \ref{prop:BeforeTwist} into account. (The non-empty conditions are the same as those described in the claim for each theorem.)

 \begin{table}[ht]
        \caption{Equality condition and non-empty condition}
        \label{tab:Assumptions}
        \centering
        \begin{tabular}{ccc}
            Inequality & Equality Condition & Non-empty Condition \\ \hline \hline
            \eqref{eq:MainThm1-1} & $v_2(\calT_{1})<1, q_i\in\calS_{0}$ & - \\ \hline
            \eqref{eq:MainThm1-2} & \begin{tabular}{c} $v_2(\calT_{1})<2, q_i\in\calS_{0}^{+}$ \\ $v_2(\calT_{1})<1, q_i\in \calS_{1}^{+}$ \end{tabular} & \begin{tabular}{c} $v_2(\calT_{1})=1, q_i\in \calS_{0}^{+}$ \\ $v_2(\calT_{1})<1, q_i\in \calS_{1}^{+}$ \end{tabular} \\ \hline
            \eqref{eq:MainThm2-1} & $v_2(\calT_{1})<1, q_i\in\calS_{0}$ & $v_2(\calT_{1})=0, q_i\in\calS_{0}$ \\ \hline
            \eqref{eq:MainThm2-2} & \begin{tabular}{c} $v_2(\calT_{1})<1, q_i\in \calS_{0}^{+}$ \\ $v_2(\calT_{1})<0, q_i\in\calS_{1}^{+}$ \end{tabular}& \begin{tabular}{c} $v_2(\calT_{1})=0, q_i\in\calS_{0}^{+}$ \\ $v_2(\calT_{1})<0, q_i\in\calS_{1}^{+}$ \end{tabular} \\ \hline
            \eqref{eq:MainThm3-1} & \begin{tabular}{c} $v_2(\calT_{4})<\min\mkakko*{1, 1+v_2(\calT_{1})}, q_i\in\calS_{0}$ \\ $v_2(\calT_{4})<\min\mkakko*{0, v_2(\calT_{1})}, q_i\in\calS_{1}$ \end{tabular}& - \\ \hline
            \eqref{eq:MainThm3-2} & \begin{tabular}{c} $v_2(\calT_{4})<2, q_i\in\calS_{0}^{+}$ \\ $v_2(\calT_{4})<1, q_i\in\calS_{1}^{+}$ \end{tabular} & $v_2(\calT_{4})=1, q_i\in\calS_{0}^{+}$ \\ \hline
            \eqref{eq:MainThm4-1} & \begin{tabular}{c} $v_2(\calT_{4})<\min\mkakko*{1, 1+v_2(\calT_{1})}, q_i\in \calS_{0}$ \\ $v_2(\calT_{4})<\min\mkakko*{0, v_2(\calT_{1})}, q_i\in\calS_{1}$ \end{tabular}& $v_2(\calT_{4})=0, q_i\in\calS_{0}$ \\ \hline
            \eqref{eq:MainThm4-2} & \begin{tabular}{c} $v_2(\calT_{4})<1, q_i\in\calS_{0}^{+}$ \\ $v_2(\calT_{4})<0, q_i\in\calS_{1}^{+}$ \end{tabular} & $v_2(\calT_{4})=0, q_i\in\calS_{0}^{+}$ \\
        \end{tabular}
    \end{table}
\end{rem}
\newpage
\begin{proof}[Proof of Theorem \ref{thm:MainThm3}]
    We only prove (ii) in the theorem. For the claim (i), we can prove it similarly.
    Note that the inequality \eqref{eq:MainThm3-2} is equivalent to
    \begin{align}
        v_2(\calT_{4m})\geq \frakw_m\cdot r(m)+\min\mkakko*{1+\delta_{\frakv_m, 0}, v_2(\calT_{4})+\delta_{\frakv_m, 2}}.\label{eq:MainThm3-3}
    \end{align}
    We show the claim by induction on the number of the prime divisors of $m$.
    
    We suppose $\sgn(\chi_{q_i})=+1$ for all $i$. Then, from Proposition \ref{prop:SumBound2}, we have the following estimation:
    \begin{align}
        v_2\skakko*{\sum_{d\mid m}\calT_{4d, 4m}}\geq r(m)+1.\label{eq:MainThm3-4}
    \end{align}
    Moreover, we see that
    \begin{align}
        \frakw_{q_1}+\min\mkakko*{1+\delta_{\frakv_{q_1}, 0}, v_2(\calT_4)+\delta_{\frakv_{q_1}, 2}}=\begin{cases}
            \min\mkakko*{2, v_2(\calT_4)} & (\frakv_{q_1}=0),\\
            1+\min\mkakko*{1, v_2(\calT_4)} & (\frakv_{q_1}=1),\\
            2+\min\mkakko*{0, v_2(\calT_4)} & (\frakv_{q_1}=2).
        \end{cases}
    \end{align}
    We consider the case $r(m)=1$. By the inequality \eqref{eq:MainThm3-4}, we have
    \begin{align}
        v_2\skakko*{\sum_{d\mid q_1}\calT_{4d, 4q_1}}
        \geq 2
        \geq \frakw_{q_1}+\min\mkakko*{1+\delta_{\frakv_{q_1}, 0}, v_2(\calT_4)+\delta_{\frakv_{q_1}, 2}}.\label{eq:MainThm3-5}
    \end{align}
    By Proposition \ref{prop:TDD}, we see that
    \begin{align}
        v_2\skakko*{\calT_{4, 4q_{1}}}
        &=v_2(a_{q_1}-2\chi_4(q_1))+v_2(\calT_{4})\\
        &\geq \frakv_{q_1}+v_2\skakko*{\calT_{4}}\label{eq:MainThm3-5.5}\\
        &\geq \frakw_{q_1}+\min\mkakko*{1+\delta_{\frakv_{q_1}, 0}, v_2(\calT_4)+\delta_{\frakv_{q_1}, 2}}.\label{eq:MainThm3-6}
    \end{align}
    Note that the inequality \eqref{eq:MainThm3-5.5} follows from $q_i\in \calS_{0}\cup\calS_{1}\cup\calS_{2}$ for all $i$. Therefore, from the inequality \eqref{eq:MainThm3-5} and the inequality \eqref{eq:MainThm3-6}, we obtain
    \begin{align}
        v_2\skakko*{\calT_{4q_1}}
        &=v_2\skakko*{\sum_{d\mid q_{1}}\calT_{4d, 4q_{1}}-\calT_{4, 4q_{1}}}\\
        &\geq \min\mkakko*{v_2\skakko*{\sum_{d\mid q_{1}}\calT_{4d, 4q_{1}}}, v_2\skakko*{\calT_{4, 4q_{1}}}}\\
        &\geq \frakw_{q_1}+\min\mkakko*{1+\delta_{\frakv_{q_1}, 0}, v_2\skakko*{\calT_{4}}+\delta_{\frakv_{q_1}, 2}}.
    \end{align}
    Moreover, if $v_2(\calT_{4})=1$ and $\frakv_{q_1}=0$, then the left-hand side is strictly larger than the right-hand side in the inequality \eqref{eq:MainThm3-5}, and the both sides are equal in the inequality \eqref{eq:MainThm3-6}. Therefore, we have $v_2(\sum_{d\mid q_1}\calT_{4d, 4q_1})>v_2(\calT_{4, 4q_1})$ and
    \begin{align}
        v_2\skakko*{\calT_{4q_1}}
        &=v_2\skakko*{\sum_{d\mid q_{1}}\calT_{4d, 4q_{1}}-\calT_{4, 4q_{1}}}\\
        &=v_2\skakko*{\calT_{4, 4q_{1}}}\\
        &=\frakw_{q_1}+\min\mkakko*{1+\delta_{\frakv_{q_1}, 0}, v_2\skakko*{\calT_{4}}+\delta_{\frakv_{q_1}, 2}}.
    \end{align}
    Thus, the claim holds for $r(m)=1$. Suppose it is true for $1, \dots, r(m)-1$. Then, by the inequality \eqref{eq:MainThm3-4}, it holds that
    \begin{align}
        v_2\skakko*{\sum_{d\mid m}\calT_{4d, 4m}}
        &\geq r(m)+1\\
        &\geq \frakw_m\cdot r(m)+\min\mkakko*{1+\delta_{\frakv_m, 0}, v_2(\calT_4)+\delta_{\frakv_m, 2}}\label{eq:MainThm3-7}
    \end{align}
    and by Proposition \ref{prop:TDD}, we see that
    \begin{align}
        v_2\skakko*{\calT_{4, 4m}}
        &=\sum_{i=1}^{r(m)}v_2\skakko*{a_{q_i}-2\chi_4(q_i)}+v_2(\calT_{4})\\
        &\geq \sum_{i=1}^{r(m)}v_2\skakko*{a_{q_i}-2}+v_2(\calT_{4})\\
        &\geq \frakv_{m}\cdot r(m)+v_2(\calT_{4})\\
        &\geq \frakw_m\cdot r(m)+\min\mkakko*{1+\delta_{\frakv_{m}, 0}, v_2(\calT_{4})+\delta_{\frakv_m, 2}}.\label{eq:MainThm3-8}
    \end{align}
    For a divisor $d$ of $m$ with $d\neq 1, m$, by Proposition \ref{prop:TDD} and the induction hypothesis, it holds that
    \begin{align}
        v_2\skakko*{\calT_{4d, 4m}}
        &=v_2(\calT_{4d})+\sum_{q\mid \frac{m}{d}}v_2(a_{q}-2\chi_d(q))\\
        &\geq v_2(\calT_{4d})+\sum_{q\mid \frac{m}{d}}v_2(a_{q}-2)\\
        &\geq \frakw_d\cdot r(d)+\min\mkakko*{1+\delta_{\frakv_{d}, 0}, v_2(\calT_{4})+\delta_{\frakv_d, 2}}+\frakv_m\cdot (r(m)-r(d))\\
        &\geq \frakw_m\cdot r(m)+\min\mkakko*{1+\delta_{\frakv_{m}, 0}, v_2(\calT_{4})+\delta_{\frakv_m, 2}}.\label{eq:MainThm3-9}
    \end{align}
    Therefore, we have
    \begin{align}
        v_2\skakko*{\calT_{4m}}
        &=v_2\skakko*{\sum_{d\mid m}\calT_{4d, 4m}-\calT_{4, 4m}-\sum_{\substack{d\mid m \\ d\neq 1, m}}\calT_{4d, 4m}}\\
        &=\min\mkakko*{v_2\skakko*{\sum_{d\mid m}\calT_{4d, 4m}}, v_2\skakko*{\calT_{4, 4m}}, v_2\skakko*{\sum_{\substack{d\mid m \\ d\neq 1, m}}\calT_{4d, 4m}}}\\
        &\geq \frakw_m\cdot r(m)+\min\mkakko*{1+\delta_{\frakv_{m}, 0}, v_2(\calT_{4})+\delta_{\frakv_m, 2}}.\label{eq:MainThm3-10}
    \end{align}
    Moreover, if $v_2(\calT_{4})=1$ and $\frakv_d=0$ for a divisor $d$ of $m$ with $d\neq 1, m$, then the left-hand side is strictly larger than the right-hand side in the inequality \eqref{eq:MainThm3-7}, and both sides are equal in the inequality \eqref{eq:MainThm3-8} and the inequality \eqref{eq:MainThm3-9}. In addition, since the number of divisors of $m$ is even, we see that
    \begin{align}
        v_2\skakko*{\sum_{\substack{d\mid m \\ d\neq 1, m}}\calT_{4d, 4m}}
        &\geq 1+\frakv_m\cdot r(m)+\min\mkakko*{1+\delta_{\frakv_{m}, 0}, v_2(\calT_{4})+\delta_{\frakv_m, 2}}\\
        &>\frakw_m\cdot r(m)+\min\mkakko*{1+\delta_{\frakv_{m}, 0}, v_2(\calT_{4})+\delta_{\frakv_m, 2}}.
    \end{align}
    Thus, we have
    \begin{align}
        v_2\skakko*{\calT_{4, 4m}}<v_2\skakko*{\sum_{d\mid m}\calT_{4d, 4m}}, v_2\skakko*{\sum_{\substack{d\mid m \\ d\neq 1, m}}\calT_{4d, 4m}}.
    \end{align}
    Therefore the equality in the inequality \eqref{eq:MainThm3-10} holds. This completes the proof.
\end{proof}

\section{Numerical Example}\label{sec:Example}

\noindent
In this section, we give examples for the case $N=34$ in Theorem \ref{thm:MainThm1} and for the case $N=37$ in Theorem \ref{thm:MainThm3}.

\subsection{Example of Theorem \ref{thm:MainThm1}}

Let $f_1\in S_2(\Gamma_0(34))^{\mathrm{new}}$ be the normalized Hecke eigenform
\begin{align}
    f_1=q + q^2 - 2q^3 + q^4 - 2q^6 - 4q^7 + q^8 + q^9 + O(q^{10})
\end{align}
By computation, we see that $\calL_{f_1}$ is rectangular, $v_2(L(f_1, 1)/\Omega_{f_1}^+)=0$ and the global minimal Weierstrass model of $E_{f_1}$ is given by the equation $y^2 + xy = x^3 - 3x + 1$ with Cremona label \texttt{34a1}. In \textsc{Table} \ref{tab:table1}, we give the comparison between the 2-adic valuation of the algebraic part $v_2\skakko*{L(f_1, \chi_m, 1)/\Omega_{f_1}^{\sgn(\chi_m)}}$ and the lower bound in Theorem \ref{thm:MainThm1} for various integers $m$ with $r(m)=1$. Here, items for which the equality condition holds are colored by red, that is, the items for $m=q\in S_{1}^{+}$.

\subsection{Example of Theorem \ref{thm:MainThm3}}

Let $f_2\in S_2(\Gamma_0(37))^{\mathrm{new}}$ be the normalized Hecke eigenform
\begin{align}
    f_2=q-2q^2-3q^3+2q^4-2q^5+6q^6-q^7+O(q^8).
\end{align}
We see that the period lattice of $f_2$ is rectangular, $v_2(L(f_2, \chi_4, 1)/\Omega_{f_2}^-)=0$ and the global minimal Weierstrass model of $E_{f_2}$ is represented as $y^2+y=x^3-x$ with Cremona label \texttt{37a1}. In \textsc{Table} \ref{tab:table2}, we give the comparison between the 2-adic valuation of the algebraic part $v_2(L(f_2, \chi_{4m}, 1)/\Omega_{f_2}^{\sgn(\chi_{4m})})$ and the lower bound in Theorem \ref{thm:MainThm3} for various integers $m$ with $r(m)=1$.

\newpage 

\begin{table}[ht]
    \caption{Comparison between the 2-adic valuation of the algebraic part and the lower bound in Theorem \ref{thm:MainThm1} for $N=34$ and $r(m)=1$.}
    \label{tab:table1}
    \centering
    \begin{tabular}{c|c|c|c|c}
        $m=q$ & $\sgn(\chi_{q})$ & $v_2(a_q-2)$ & $v_2\skakko*{L(f_1, \chi_m, 1)/\Omega_{f_1}^{\sgn(\chi_m)}}$ & Lower bound \\ \hline
        \red{5} & \red{$+1$} & \red{1} & \red{1} & \red{1} \\
        7 & $-1$ & 1 & 1 & 1 \\
        11 & $-1$ & 2 & $+\infty$ & 2 \\
        19 & $-1$ & 1 & 2 & 1 \\
        23 & $-1$ & 1 & 1 & 1 \\
        \red{29} & \red{$+1$} & \red{1} & \red{1} & \red{1} \\
        31 & $-1$ & 1 & 1 & 1 \\
        \red{37} & \red{$+1$} & \red{1} & \red{1} & \red{1} \\
        41 & $+1$ & 2 & $+\infty$ & 2 \\
        43 & $-1$ & 1 & 4 & 1 \\
        47 & $-1$ & 1 & $+\infty$ & 1 \\
        59 & $-1$ & 1 & $+\infty$ & 1 \\
        \red{61} & \red{$+1$} & \red{1} & \red{1} & \red{1} \\
        67 & $-1$ & 1 & 2 & 1 \\
        71 & $-1$ & 1 & 1 & 1 \\
        79 & $-1$ & 1 & 1 & 1 \\
        83 & $-1$ & 1 & $+\infty$ & 1 \\
        97 & $+1$ & 2 & $+\infty$ & 2 \\
        103 & $-1$ & 1 & $+\infty$ & 1 \\
        \red{109} & \red{$+1$} & \red{1} & \red{1} & \red{1} \\
        127 & $-1$ & 1 & $+\infty$ & 1 \\
        137 & $+1$ & 2 & 2 & 2 \\
        149 & $+1$ & 2 & $+\infty$ & 2 \\
        151 & $-1$ & 1 & $+\infty$ & 1 \\
        157 & $+1$ & 2 & $+\infty$ & 2 \\
        167 & $-1$ & 1 & 1 & 1 \\
        \red{173} & \red{$+1$} & \red{1} & \red{1} & \red{1} \\
        179 & $-1$ & 1 & 2 & 1 \\
        \red{181} & \red{$+1$} & \red{1} & \red{1} & \red{1} \\
        191 & $-1$ & 1 & $+\infty$ & 1 \\
    \end{tabular}
\end{table}

\newpage

\begin{table}[ht]
    \caption{Comparison between the 2-adic valuation of the algebraic part and the lower bound in Theorem \ref{thm:MainThm2} for $N=37$ and $r(m)=1$.}
    \label{tab:table2}
    \centering
    \begin{tabular}{c|c|c|c|c}
        $m=q$ & $\sgn(\chi_{q})$ & $v_2(a_{q}-2)$ & $v_2\skakko*{L(f_2, \chi_{4m}, 1)/\Omega_f^{\sgn(\chi_{4m})}}$ & Lower bound \\ \hline
        3 & $-1$ & 0 & $+\infty$ & 0 \\
        5 & $+1$ & 2 & $+\infty$ & 2 \\
        7 & $-1$ & 0 & $+\infty$ & 0 \\
        11 & $-1$ & 0 & $+\infty$ & 0 \\
        13 & $+1$ & 2 & $+\infty$ & 2 \\
        17 & $+1$ & 1 & $+\infty$ & 1 \\
        19 & $-1$ & 1 & 1 & 1 \\
        29 & $+1$ & 2 & $+\infty$ & 2 \\
        31 & $-1$ & 1 & $+\infty$ & 1 \\
        \red{41} & \red{$+1$} & \red{0} & \red{0} & \red{0} \\
        47 & $-1$ & 0 & $+\infty$ & 0 \\
        \red{53} & \red{$+1$} & \red{0} & \red{0} & \red{0} \\
        59 & $-1$ & 1 & 3 & 1 \\
        61 & $+1$ & 1 & $+\infty$ & 1 \\
        67 & $-1$ & 1 & $+\infty$ & 1 \\
        71 & $-1$ & 0 & $+\infty$ & 0 \\
        \red{73} & \red{$+1$} & \red{0} & \red{0} & \red{0} \\
        79 & $-1$ & 1 & 1 & 1 \\
        83 & $-1$ & 0 & $+\infty$ & 0 \\
        89 & $+1$ & 1 & $+\infty$ & 1 \\
        97 & $+1$ & 1 & $+\infty$ & 1 \\
        \red{101} & \red{$+1$} & \red{0} & \red{0} & \red{0} \\
        107 & $-1$ & 1 & $+\infty$ & 1 \\
        109 & $+1$ & 1 & $+\infty$ & 1 \\
        113 & $+1$ & 2 & $+\infty$ & 2 \\
        127 & $-1$ & 0 & $+\infty$ & 0 \\
        131 & $-1$ & 1 & 1 & 1 \\
        139 & $-1$ & 1 & $+\infty$ & 1 \\
        \red{149} & \red{$+1$} & \red{0} & \red{0} & \red{0} \\
        151 & $-1$ & 1 & $+\infty$ & 1 \\
    \end{tabular}
\end{table}

\newpage

\bibliographystyle{abbrv}
\bibliography{references.bib}
%
%	bibtexによる参考文献の表示.
%	bibs.bibというbibファイルを作り, そこに文献データを登録しておく.
%	引用文献を表示させるには
%	「LaTeX」で1回タイプセット -> 「BibTeX」で1回タイプセット -> 「LaTeX」で3回タイプセット
%	とする. それ以降はbibファイルに加除がないなら「BibTeX」によるタイプセットは必要なし.
%	plain(jplain)は参考文献をアルファベット順で出力. unsrt(junsrt)は引用された順で出力する.
%	ただしlatexmkを使用したタイプセットならば上で述べた複数回のタイプセットは自動的に一度でやってくれる.

\end{document}